\def \Zenter{\mathop{\sf Z}\nolimits}
\newcommand{\cero}{\bar{0}}
\newcommand{\scero}{_{\bar{0}}}
\newcommand{\uno}{\bar{1}}
\newcommand{\suno}{_{\bar{1}}}
\newcommand{\menosuno}[2]{(-1)^{|#1| |#2|}}
\newcommand{\sh}{^{\sharp}}
\newcommand{\slmn}{\mathfrak{sl}(m, n, A)}
\newcommand{\stmn}{\mathfrak{st}(m, n, A)}
\newcommand{\stdu}{\mathfrak{st}(2, 1, A)}
\newcommand{\sttu}{\mathfrak{st}(3, 1, A)}
\newcommand{\stdd}{\mathfrak{st}(2, 2, A)}
\newcommand{\Hij}{H_{ij}}
\newcommand{\Hji}{H_{ji}}
\newcommand{\Huj}{H_{1j}}
\newcommand{\thij}{\widetilde{H}_{ij}}
\newcommand{\thkl}{\widetilde{H}_{kl}}
\newcommand{\thik}{\widetilde{H}_{ik}}
\newcommand{\thji}{\widetilde{H}_{ji}}
\newcommand{\Fij}{F_{ij}}
\newcommand{\Fji}{F_{ji}}
\newcommand{\Fik}{F_{ik}}
\newcommand{\Fkj}{F_{kj}}
\newcommand{\Fkl}{F_{kl}}
\newcommand{\Fki}{F_{ki}}
\newcommand{\Fjk}{F_{jk}}
\newcommand{\Flj}{F_{lj}}
\newcommand{\Fil}{F_{il}}
\newcommand{\tfij}{\widetilde{F}_{ij}}
\newcommand{\tfji}{\widetilde{F}_{ji}}
\newcommand{\tfik}{\widetilde{F}_{ik}}
\newcommand{\tfkj}{\widetilde{F}_{kj}}
\newcommand{\tfkl}{\widetilde{F}_{kl}}
\newcommand{\tfki}{\widetilde{F}_{ki}}
\newcommand{\tflj}{\widetilde{F}_{lj}}
\newcommand{\tfil}{\widetilde{F}_{il}}
\newcommand{\tfjl}{\widetilde{F}_{jl}}
\DeclareMathOperator{\Ho}{H}
\DeclareMathOperator{\hc}{HC}
\DeclareMathOperator{\VV}{\mathcal{V}}
\DeclareMathOperator{\Str}{Str}
\DeclareMathOperator{\III}{\mathcal{I}}
\DeclareMathOperator{\WW}{\mathcal{W}}
\newtheorem{Th}{Theorem}[section]
\newtheorem{Le}[Th]{Lemma}
\newtheorem{Co}[Th]{Corollary}
\theoremstyle{definition}
\newtheorem{De}[Th]{Definition}
\theoremstyle{remark}
\begin{document}

\title[Universal central extensions of $\mathfrak{sl}(m, n, A)$ of small rank]{Universal central extensions of $\mathfrak{sl}(m, n, A)$ of small rank over associative superalgebras}
\author{X. García-Martínez}
\address{[X. García-Martínez] Department of Algebra, University of Santiago de Compostela, 15782, Spain.}
\email{xabier.garcia@usc.es}
\author{M. Ladra}
\address{[M. Ladra] Department of Algebra, University of Santiago de Compostela, 15782, Spain.}
\email{manuel.ladra@usc.es}

\thanks{The authors were supported by Ministerio de Economía y Competitividad (Spain), grant MTM2013-43687-P (European FEDER support included).
The authors were also supported by Xunta de Galicia, grant GRC2013-045 (European FEDER support included).
The first author was also supported  by FPU scholarship, Ministerio de Educación, Cultura y Deporte  (Spain).}

\begin{abstract}
We complete the solution of the problem of finding the universal central extension of the matrix superalgebras $\slmn$ where $A$ is an associative superalgebra and computing $\Ho_2\big(\slmn\big)$. The Steinberg Lie superalgebra $\stmn$ has a very important role and we will also find out $\Ho_2\big(\stmn\big)$. In \cite{ChSu} it is solved the problem where $m+n \geq 5$ and in \cite{ChGu} it is solved when $n=0$, so here we work out the three remaining cases $\mathfrak{sl}(2, 1, A), \mathfrak{sl}(3, 1, A)$ and $\mathfrak{sl}(2, 2, A)$.

\end{abstract}
\subjclass[2010]{17B60, 17B55, 17B05}
\keywords{Lie superalgebras, Steinberg superalgebras, Universal central extensions}

\maketitle

\section{Introduction}
The study of central extensions plays an important role in the theory of groups or Lie algebras. This theory has numerous applications going through physics, representation theory or homological algebra. They have been studied by many people in the theory of Lie algebras as \cite{Gar}, \cite{Van}, etc. The universal central extension performs a relevant role in this study, since it simplifies the task of finding all central extensions and moreover, its kernel is the second homology group. In \cite{Ell1} the universal central extension of Lie algebras is constructed as a non-abelian tensor product, and in \cite{Neh} and \cite{IoKo1} some of the results of \cite{Gar} are extended to Lie superalgebras and it is constructed the universal central extension. The main problem of these constructions is that they are usually hard to compute.

The concrete problem of finding the universal central extension of $\mathfrak{sl}_n(A)$ for $n \geq 5$ was solved in \cite{KaLo} and it is a very important result, since it involves Steinberg Lie algebras, which have been very studied (see \cite{Blo} or \cite{Gao}) and they allow to see the additive $K$-theory. It is obtained that if $n \geq 5$, then $\mathfrak{st}_n(A)$ is the universal central extension of $\mathfrak{sl}_n(A)$ and if $A$ is $K$-free, the kernel is isomorphic to the first cyclic homology $\hc_1(A)$. The problem of finding the universal central extension of $\mathfrak{sl}_n(A)$ and $\mathfrak{st}_n(A)$ for $n = 3, 4$ was solved years later in \cite{GaSh}.

Moreover, in \cite{MiPi} it is computed the universal central extension of the Lie superalgebras $\slmn$ and $\stmn$ with $m+n \geq 5$, where $A$ is an associative algebra, and the remaining cases where $m+n = 3, 4$ are solved in \cite{SCG}.

If $A$ is an associative superalgebra, the universal central extension of $\mathfrak{sl}_n(A)$ is computed in \cite{ChGu} for all $n \geq 3$, and for $\slmn$ for $m+ n \geq 5$ in \cite{ChSu}, leaving as an open problem the cases where $m+n=3, 4$. In this paper, we will solve this remaining cases completing the computation of the universal central extension of $\slmn$ and $\stmn$ with $A$ an associative superalgebra and $m+n \geq 3$, and therefore giving a complete characterization of the second homology $\Ho_2\big(\stmn\big)$ and $\Ho_2\big(\slmn\big)$ for $m+n \geq 3$ (Theorem \ref{T:con1} and Theorem \ref{T:con2})

The organization of this paper is as follows. After this introduction, in Section 2 we give some preliminary well-known results and some technical lemmas about $\slmn$ and $\stmn$. In Section 3 we adapt the classical construction of a central extension from a super 2-cocycle in Lie superalgebras. In Section 4 we start with the case of $\mathfrak{sl}(2, 1, A)$ and we show that its universal central extension is $\stdu$, building a (unique) homomorphism to any central extension. In Section 5 we find the universal central extension of $\sttu$ (which consequently will be the universal central extension of $\mathfrak{sl}(3, 1, A)$) via the construction of a super 2-cocycle. In Section 6 we do the same as in the previous section but for $\stdd$. Finally, in Section 7 we give a concluding remarks establishing a combination of the results presented here with results of \cite{ChSu} and \cite{ChGu} to give the full computation of $\Ho_2\big(\stmn\big)$ and $\Ho_2\big(\slmn\big)$ for $m+n \geq 3$.

\section{The Lie superalgebras $\slmn$ and $\stmn$}

Through all this paper we will consider $K$ as an unital commutative ring and $A = A\scero \oplus A\suno$ an associative unital $K$-superalgebra, $K$-free with a $K$-basis containing the identity. Let $\{ 1, \dots ,m \} \cup \{ m+1, \dots ,m+n \}$ be the graded set, where the first set is the even part and the second one the odd part. We consider now $\text{Mat}(m, n, A)$ the $(m+n) \times (m+n)$ matrices with coefficients in $A$. It is defined a graduation where homogeneous elements are matrices, denoted by $E_{ij}(a)$, having $a \in A\scero, A\suno$ at position $(i, j)$ and zero elsewhere and $|E_{ij}(a)| = |i| + |j| + |a|$. With this graduation we define the associative superalgebra $\mathfrak{gl}(m, n, A)$ which underlying set is $\text{Mat}(m, n, A)$ with the usual matrix product and it is endowed by a Lie superalgebra structure with the usual bracket $[x, y] = xy - \menosuno{x}{y}yx$.

Assuming that $m+n \geq 3$ we define the Lie superalgebra
\[
\slmn = [\mathfrak{gl}(m, n, A), \mathfrak{gl}(m, n, A)].
\]

In \cite{ChSu} it is shown that $\slmn$ is the Lie superalgebra generated by the elements $E_{ij}, 1 \leq i \neq j \leq m+n$, $a \in A$, where the Lie bracket is given by
\[
[E_{ij}(a), E_{kl}(b)] = \delta_{jk}E_{il}(ab) - \menosuno{E_{ij}(a)}{E_{kl}(ab)} \delta_{li}E_{kj}(ba).
\]

In \cite{Bla} is introduced a generalization of the supertrace for $x \in \mathfrak{gl}(m, n, A)$, defined as follows:
\[
\Str(x) = \sum_{i = 1}^{m+n}(-1)^{|i|(|i|+|x_{ii}|)}x_{ii},
\]
where $x_{ii}$ represents the element of $x$ in the position $(i,i)$. It is shown in \cite{ChSu} that if $m \geq 1$ then $\slmn = \{ x \in \mathfrak{gl}(m, n, A) : \Str(x) \in [A, A] \}$ and that $\slmn$ is perfect.

For $m + n \geq 3$ the Steinberg Lie superalgebra $\stmn$ is defined as the Lie superalgebra over $K$ generated by homogeneous $F_{ij}(a)$, $1 \leq i \neq j \leq m+n$ and $a \in A$ an homogeneous element, where the grading is given by $|F_{ij}(a)| = |i| + |j| + |a|$, satisfying the following relations:
\begin{equation}\label{Eq:st1}
a \mapsto F_{ij}(a) \text{ is a }K\text{-linear map,}
\end{equation}
\begin{equation}\label{Eq:st2}
[F_{ij}(a), F_{jk}(b)] = F_{ik}(ab), \text{ for distinct }i, j, k,
\end{equation}
\begin{equation}\label{Eq:st3}
[F_{ij}(a), F_{kl}(b)] = 0, \text{ for } j \neq k, i \neq l,
\end{equation}
where $a, b \in A$, $1 \leq i,j,k,l \leq m+n$. We know that $\stmn$ is a perfect Lie algebra and there is a canonical epimorphism
\[
\varphi \colon \stmn \to \slmn, \qquad \varphi\big(F_{ij}(a)\big) \mapsto E_{ij}(a).
\]

In \cite{ChSu} it is shown that if $m+n \geq 5$, this epimorphism is the universal central extension of $\slmn$ and if $m + n \geq 3$, its kernel is isomorphic to the cyclic homology $\hc_1(A)$. The remaining cases where $m + n = 3, 4$ are left as an open problem and they will be the object of study of this work. To solve this we will find the universal central extension of $\stmn$ and by \cite[Corollary 1.9]{Neh} it is the universal central extension of $\slmn$.

We begin giving some relations in $\stmn$ that will be useful. Let be
\[
\Hij(a, b) = [\Fij(a), \Fji(b)],
\]
\[
h(a, b) = \Huj(a,b) - \menosuno{a}{b}\Huj(1, ba),
\]
for $1 \leq i \neq j \leq m+n, a \in A$. It is well defined since by \cite[Lemma 5.1]{ChSu} we know that $h(a, b)$ does not depend on $j$, for $j \neq 1$. We recall that $|\Hij(a, b)| = |a|+|b|$ for homogeneous $a, b \in A$.

\begin{Le}\label{L:ident}
We have the following identities in $\stmn$,
\begin{align}
{}& \Hij(a, b) = - (-1)^{(|i| + |j| + |a|)(|i| + |j| + |b|)}\Hji(b, a), \label{Eq:id1}\\
{}& [\Hij(a, b), \Fik(c)] = \Fik(abc),\label{Eq:id2} \\
{}& [\Hij(a, b), \Fki(c)] = -(-1)^{(|a| + |b|)(|i| + |k| + |c|)} \Fki(cab), \label{Eq:id3} \\
{}&[\Hij(a, b), \Fkj(c)] = (-1)^{(|i|+|j|+|a|)(|i|+|j|+|b|) + (|a|+|b|)(|j|+|k|+|c|)}\Fkj(cba) \label{Eq:id4} \\
{}& [\Hij(a, b), \Fij(c)] = \Fij\big(abc + (-1)^{(|i| + |j| + |a||b| + |b||c| + |c||a|)}cba \big), \label{Eq:id5} \\
{}& [\Hij(a, b), \Fkl(c)] = 0, \label{Eq:id6} \\
{}& [h(a, b), F_{1i}(c)] = F_{1i}\big((ab - \menosuno{a}{b}ba)c  \big), \label{Eq:id7}   \\
{}& [h(a, b), F_{jk}(c)] = 0 \text{ for } j, k \geq 2. \label{Eq:id8}
\end{align}
for homogeneous $a, b, c \in A$ and $i, j, k, l$ distinct.
\end{Le}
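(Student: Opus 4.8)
The plan is to obtain all eight formulas from three ingredients only: graded antisymmetry $[x,y]=-\menosuno{x}{y}[y,x]$, the graded Jacobi identity $[[x,y],z]=[x,[y,z]]-\menosuno{x}{y}[y,[x,z]]$, and the defining relations \eqref{Eq:st2}--\eqref{Eq:st3}. Identity \eqref{Eq:id1} is nothing but graded antisymmetry applied to $\Hij(a,b)=[\Fij(a),\Fji(b)]$, using $|\Fij(a)|=|i|+|j|+|a|$ and $|\Fji(b)|=|i|+|j|+|b|$.

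For \eqref{Eq:id2}, \eqref{Eq:id3} and \eqref{Eq:id6} I would expand $\Hij(a,b)=[\Fij(a),\Fji(b)]$ and apply the Jacobi identity with $z$ the relevant generator, producing two inner brackets whose fate is decided by the index pattern. In \eqref{Eq:id6}, with $i,j,k,l$ pairwise distinct, both $[\Fji(b),\Fkl(c)]$ and $[\Fij(a),\Fkl(c)]$ vanish by \eqref{Eq:st3}, so the bracket is $0$. In \eqref{Eq:id2} the inner bracket $[\Fij(a),\Fik(c)]$ vanishes by \eqref{Eq:st3}, while $[\Fji(b),\Fik(c)]=\Fjk(bc)$ and then $[\Fij(a),\Fjk(bc)]=\Fik(abc)$ by \eqref{Eq:st2}, with no residual sign. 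Identity \eqref{Eq:id3} is similar, except the surviving inner brackets must be reoriented by antisymmetry before \eqref{Eq:st2} applies; collecting the resulting Koszul signs and reducing modulo $2$ gives exactly $(-1)^{(|a|+|b|)(|i|+|k|+|c|)}$. I would not prove \eqref{Eq:id4} from scratch: rewriting $\Hij(a,b)$ via \eqref{Eq:id1} as a multiple of $\Hji(b,a)$ turns $[\Hij(a,b),\Fkj(c)]$ into an instance of \eqref{Eq:id3} (the second index of $\Fkj$ now matching the first index of $\Hji$), and the product of the two signs is the stated exponent.

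Identity \eqref{Eq:id5} is the genuinely different one and the step I expect to be hardest. Expanding $\Hij(a,b)$ and applying Jacobi to $[\,\cdot\,,\Fij(c)]$ produces the inner bracket $[\Fji(b),\Fij(c)]=\Hji(b,c)$, another $H$ rather than a single generator, so the computation fails to close. To get around this I would split the target instead: since $m+n\geq 3$ there is an index $k\notin\{i,j\}$, and $\Fij(c)=[\Fik(c),\Fkj(1)]$ by \eqref{Eq:st2}. Applying the Jacobi identity to $[\Hij(a,b),[\Fik(c),\Fkj(1)]]$ and evaluating the two pieces by \eqref{Eq:id2} (on $\Fik$) and \eqref{Eq:id4} (on $\Fkj$), then recombining with \eqref{Eq:st2}, yields $\Fij(abc)$ from the first piece and a signed $\Fij(cba)$ from the second. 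The only substantive work is verifying that the accumulated exponent on the second piece reduces modulo $2$ to $|i|+|j|+|a||b|+|b||c|+|c||a|$; this sign chase, together with the one in \eqref{Eq:id3}, is the main obstacle of the lemma.

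Finally, \eqref{Eq:id7} and \eqref{Eq:id8} follow from $h(a,b)=\Huj(a,b)-\menosuno{a}{b}\Huj(1,ba)$ together with the fact, recalled from \cite[Lemma 5.1]{ChSu}, that $h$ is independent of the auxiliary index $j\neq 1$. For \eqref{Eq:id7} I would take $j\neq 1,i$ and apply \eqref{Eq:id2} to each summand, getting $F_{1i}(abc)-\menosuno{a}{b}F_{1i}(bac)=F_{1i}\big((ab-\menosuno{a}{b}ba)c\big)$. For \eqref{Eq:id8}, with $j,k\geq 2$, I would choose the free index $l\neq j,k$, so that $\{1,l\}\cap\{j,k\}=\emptyset$ and both summands die by \eqref{Eq:id6}; such an $l\geq 2$ exists except when $m+n=3$ and $\{j,k\}=\{2,3\}$. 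In that last case one computes directly that $[H_{12}(a,b),F_{23}(c)]$ and $[H_{12}(1,ba),F_{23}(c)]$ are explicit multiples of $F_{23}(bac)$ (via \eqref{Eq:id1} and \eqref{Eq:id2}), and the precise coefficients built into $h$ make the two signs coincide, so the difference vanishes. This forced cancellation is the conceptual point of \eqref{Eq:id8}.
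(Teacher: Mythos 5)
Your proof is correct, and in spirit it is the same argument the paper intends; the difference is that the paper's actual proof is essentially a citation. For \eqref{Eq:id1}--\eqref{Eq:id6} the paper simply refers to \cite{ChSu}, remarking that they follow from antisymmetry and the Jacobi identity, which is precisely the computation you carry out in full: your reduction of \eqref{Eq:id4} to \eqref{Eq:id3} via \eqref{Eq:id1}, and of \eqref{Eq:id5} to \eqref{Eq:id2} and \eqref{Eq:id4} via the splitting $\Fij(c)=[\Fik(c),\Fkj(1)]$, are the standard route, and your claimed exponents do reduce correctly modulo $2$ (e.g.\ for \eqref{Eq:id3} the accumulated exponent $(u+|a|)(u+|b|)+(u+|a|)(|i|+|k|+|c|)+(u+|b|)(|j|+|k|+|c|+|a|)$ with $u=|i|+|j|$ collapses to $(|a|+|b|)(|i|+|k|+|c|)$, and for \eqref{Eq:id5} to $|i|+|j|+|a||b|+|b||c|+|c||a|$). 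For \eqref{Eq:id7} and \eqref{Eq:id8} the paper says only ``apply \eqref{Eq:id2} and \eqref{Eq:id6} to the definition of $h(a,b)$,'' and here your version is genuinely more careful: applying \eqref{Eq:id6} to prove \eqref{Eq:id8} requires choosing the auxiliary index $l$ of $h$ with $l\notin\{1,j,k\}$, which is possible exactly when $m+n\geq 4$; in the case $m+n=3$ with $\{j,k\}=\{2,3\}$ no such choice exists, and your direct computation --- that $[H_{12}(a,b),F_{23}(c)]$ and $\menosuno{a}{b}[H_{12}(1,ba),F_{23}(c)]$ are both equal to $-(-1)^{|a||b|}F_{23}(bac)$, so the difference defining $[h(a,b),F_{23}(c)]$ vanishes --- fills a case the paper's one-line recipe does not literally cover (harmlessly for the paper, since it only invokes \eqref{Eq:id8} in the rank-$4$ sections, but your patch is needed for the lemma as stated, which covers $m+n=3$). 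So: same method, but self-contained where the paper delegates to \cite{ChSu}, and complete on an edge case the paper glosses over.
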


\begin{proof}
The proofs of \eqref{Eq:id1}--\eqref{Eq:id6} can be found in \cite{ChSu}. They are consequences of antisymmetry and Jacobi identities. To see \eqref{Eq:id7} and \eqref{Eq:id8} we just need to apply \eqref{Eq:id2} and \eqref{Eq:id6} to the definition of $h(a, b)$.
\end{proof}

Following the steps of \cite{ChSu} we have a lemma that simplifies the structure of $\stmn$.
\begin{Le}\label{L:dec}
Let $F_{ij}(A)$ be the subalgebra generated by $\Fij(a)$, $\mathcal{N}^{+}$ the subalgebra generated by $\Fij(a)$ for $1 \leq i < j \leq m+n$, $\mathcal{N}^{-}$ the subalgebra generated by $\Fij(a)$ for $1 \leq j < i \leq m+n$ and $\mathcal{H}$ the subalgebra generated by $\Hij(a, b)$, for all $a, b \in A$. Then
\[
\mathcal{N}^{+} = \bigoplus_{1 \leq i < j \leq m+n} \Fij(A),
\]
\[
\mathcal{N}^{-} = \bigoplus_{1 \leq j < i \leq m+n} \Fij(A),
\]
\[
\mathcal{H} = h(A, A) \oplus \bigg( \bigoplus_{j = 2}^{m+n}\Huj(1, A) \bigg),
\]
and we have the decomposition
\[
\stmn = \mathcal{N}^{+} \oplus \mathcal{H} \oplus \mathcal{N}^{-} = h(A, A) \oplus \bigg( \bigoplus_{j = 2}^{m+n}\Huj(1, A) \bigg) \bigoplus_{1 \leq i \neq j \leq m+ n} \Fij(A),
\]
where $h(A, A)$ and $H_{1j}(1, A)$ denote the subalgebras generated by $h(a, b)$ and $H_{1j}(1, a)$, respectively, for all $a, b \in A$.
\end{Le}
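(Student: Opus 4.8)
The plan is to prove the decomposition in two stages — that the three displayed subspaces span $\stmn$ and that their sum is direct — treating the nilpotent parts $\mathcal{N}^{\pm}$ and the Cartan-type part $\mathcal{H}$ separately. For the nilpotent parts I would first note that each $\Fij(A)$ is abelian by \eqref{Eq:st3}, hence equals the $K$-span of $\{\Fij(a)\}$, and then show that $\sum_{1\leq i<j\leq m+n}\Fij(A)$ is already closed under the bracket, so that it coincides with the subalgebra $\mathcal{N}^{+}$ it generates. Indeed, for $i<j$ and $k<l$ relation \eqref{Eq:st3} annihilates $[\Fij(a),F_{kl}(b)]$ unless $j=k$ or $i=l$, and in either surviving case \eqref{Eq:st2} together with antisymmetry returns $F_{il}(ab)$ or $F_{kj}(\pm ba)$, whose indices are again strictly increasing; as these two cases cannot hold at once, no diagonal $H$-term appears. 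The identical computation gives $\mathcal{N}^{-}=\sum_{1\leq j<i\leq m+n}\Fij(A)$. Every generator $\Fij(a)$ lies in $\mathcal{N}^{+}$ or $\mathcal{N}^{-}$, brackets of $\mathcal{H}$ against $\mathcal{N}^{\pm}$ stay in $\mathcal{N}^{\pm}$ by \eqref{Eq:id2}--\eqref{Eq:id6}, and $[\mathcal{N}^{+},\mathcal{N}^{-}]$ lands in $\mathcal{N}^{+}\oplus\mathcal{H}\oplus\mathcal{N}^{-}$ by \eqref{Eq:st2}; hence this sum is a subalgebra containing all generators, so it equals $\stmn$.

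The core of the proof is showing that $\mathcal{H}$ collapses to $h(A,A)\oplus\big(\bigoplus_{j\geq 2}\Huj(1,A)\big)$. I would first check, using \eqref{Eq:id7}, \eqref{Eq:id8} and the graded Jacobi identity, that this subspace is a subalgebra, so that it remains only to show each generator $\Hij(a,b)$ belongs to it. The mechanism is to drive both indices to $1$: for $i,j\neq 1$ write $\Fij(a)=[F_{i1}(a),F_{1j}(1)]$ and expand $\Hij(a,b)=[[F_{i1}(a),F_{1j}(1)],\Fji(b)]$ by graded Jacobi; every bracket that appears is evaluated by \eqref{Eq:st2}--\eqref{Eq:st3} and collapses to $H_{i1}$- and $H_{j1}$-type terms, which \eqref{Eq:id1} converts to $H_{1i}$- and $H_{1j}$-type terms, and finally the defining relation $\Huj(a,b)=h(a,b)+\menosuno{a}{b}\Huj(1,ba)$ puts each of these in the required shape. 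This is the step I expect to be the main obstacle: it is a genuine finite computation whose entire difficulty lies in tracking the Koszul signs through the Jacobi expansions, and the identities of Lemma \ref{L:ident} are tailored precisely to keep that bookkeeping under control.

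Directness I would obtain by pushing the decomposition through the canonical epimorphism $\varphi\colon\stmn\to\slmn$. Here $\varphi(\Fij(a))=E_{ij}(a)$ is supported on a single off-diagonal slot, $\varphi(\Huj(1,a))=E_{11}(a)-(-1)^{\epsilon}E_{jj}(a)$ has for $j\geq 2$ a nonzero entry in the distinct diagonal slot $(j,j)$, and a short sign cancellation gives $\varphi(h(a,b))=E_{11}\big(ab-\menosuno{a}{b}ba\big)$, supported only on $(1,1)$. Given a vanishing sum $x^{+}+\big(\eta+\sum_{j\geq 2}\Huj(1,a_j)\big)+x^{-}=0$ with $x^{\pm}\in\mathcal{N}^{\pm}$ and $\eta\in h(A,A)$, applying $\varphi$ and separating strictly upper triangular, diagonal, and strictly lower triangular matrices forces $\varphi(x^{+})=\varphi(x^{-})=0$; since $A$ is $K$-free with distinct off-diagonal positions, $\varphi$ is injective on $\mathcal{N}^{\pm}$, so $x^{\pm}=0$, and reading the diagonal entries at the slots $(j,j)$, $j\geq 2$, forces every $a_j=0$. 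Then $\eta=0$ follows at once from the original relation. The point worth flagging is that $\varphi$ is \emph{not} injective on $h(A,A)$ — its kernel is $\hc_1(A)$, which sits inside this Cartan-type part — but we never need that injectivity, since the $h(A,A)$ component is recovered by subtraction after the others are pinned down; this is exactly why the $(1,1)$ slot is never used to separate components.
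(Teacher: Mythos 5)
Your proposal is correct and takes essentially the approach the paper relies on: the paper gives no written proof of this lemma, deferring to \cite{ChSu} (``following the steps of''), and that argument is exactly your two-stage scheme --- bracket-closure computations showing the displayed sum is a subalgebra containing all generators, including the Jacobi reduction of $H_{ij}(a,b)$ via $F_{ij}(a)=[F_{i1}(a),F_{1j}(1)]$ to $h$- and $H_{1j}(1,\cdot)$-type terms, followed by directness read off from the images of $F_{ij}(A)$, $H_{1j}(1,A)$ and $h(A,A)$ in distinct matrix slots under the canonical epimorphism $\varphi\colon\stmn\to\slmn$. Your observation that $\varphi$ is not injective on $h(A,A)$ (its kernel containing $\hc_1(A)$) but that this injectivity is never needed, since the $h(A,A)$ component is recovered by subtraction, is a correct and worthwhile refinement of that standard argument.
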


To understand the universal central extension of $\stmn$ we will also need the next definition.

\begin{De}\label{D:Am}
For $m \geq0$, we define $\III_m$ as the graded ideal of $A$ generated by the elements $ma$ and $ab - \menosuno{a}{b}ba$. We will denote by $A_m =  A/\III_m$ the quotient algebra and by $\bar{a} = a + \III_m$ its elements.
\end{De}

\begin{Le}[\cite{ChGu}]
$\III_m = mA + A[A, A]$ and $[A, A]A = A[A, A]$.
\end{Le}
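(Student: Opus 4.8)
The plan is to prove the two assertions in reverse order, establishing the commutator identity $[A, A]A = A[A, A]$ first and then using it to pin down $\III_m$. Throughout write $[a, b] = ab - \menosuno{a}{b}ba$ for the supercommutator of homogeneous $a, b \in A$, so that $[A, A]$ denotes the $K$-span of all such brackets, $A[A, A]$ the left $A$-submodule it generates, and $[A, A]A$ the right one.

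The key computational input is the pair of super-Leibniz identities
\begin{align*}
[ab, c] &= a[b, c] + \menosuno{b}{c}[a, c]b, \\
[a, bc] &= [a, b]c + \menosuno{a}{b}b[a, c],
\end{align*}
valid for homogeneous $a, b, c$, which follow by expanding both sides and cancelling using associativity, the only thing to watch being the placement of the Koszul signs. Solving the first identity for $[a, c]b$ gives $[a, c]b = \menosuno{b}{c}\big([ab, c] - a[b, c]\big) \in [A, A] + A[A, A]$, and since $A$ is unital we have $[A, A] = 1 \cdot [A, A] \subseteq A[A, A]$; hence $[A, A]A \subseteq A[A, A]$. Solving the second identity for $b[a, c]$ yields symmetrically $A[A, A] \subseteq [A, A]A$, and the two inclusions give $[A, A]A = A[A, A]$, which is the second assertion.

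For the first assertion I would argue that $mA + A[A, A]$ is exactly the two-sided ideal generated by the elements $ma$ and $[a, b]$. It contains all the generators, since $ma \in mA$ and $[a, b] = 1 \cdot [a, b] \in A[A, A]$, and it is contained in every ideal containing them because $mA$ and $A[A, A]$ are each contained in $\III_m$. It remains to check that $mA + A[A, A]$ is itself a two-sided ideal. It is plainly a left ideal, as $A \cdot mA = mA$ and $A \cdot A[A, A] = A[A, A]$ by unitality; for the right-ideal property one computes $(mA + A[A, A])A = mA + A\big([A, A]A\big) = mA + A\big(A[A, A]\big) = mA + A[A, A]$, where the middle equality is precisely the identity $[A, A]A = A[A, A]$ just proved. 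Hence $\III_m = mA + A[A, A]$.

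Since the whole argument reduces to elementary manipulations of associative supercommutators together with unitality of $A$, there is no serious obstacle; the only delicate point is keeping the Koszul signs correct when verifying the two super-Leibniz identities, after which everything is formal.
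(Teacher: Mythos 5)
Your proof is correct. There is, however, no in-paper proof to compare it against: the paper states this lemma as a quotation from \cite{ChGu} and gives no argument, so your write-up fills a genuine gap rather than paralleling an existing one. Your route is the standard one: both halves rest on the super-Leibniz identities $[ab, c] = a[b, c] + \menosuno{b}{c}[a, c]b$ and $[a, bc] = [a, b]c + \menosuno{a}{b}b[a, c]$, whose Koszul signs expand correctly as you state (I checked: e.g.\ the cross terms $\pm\menosuno{b}{c}acb$ cancel in the first identity, leaving $abc - (-1)^{(|a|+|b|)|c|}cab$), together with unitality to get $[A, A] \subseteq A[A, A]$ and $[A, A] \subseteq [A, A]A$ before passing to the two inclusions. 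The ideal-theoretic half is also complete: $mA + A[A, A]$ visibly contains the generators $ma$ and $ab - \menosuno{a}{b}ba$, is contained in $\III_m$, is a left ideal by unitality, and your right-ideal computation $A[A, A]A = A\big(A[A, A]\big) = A[A, A]$ is exactly where the commutator identity is needed. One small point worth making explicit, since $\III_m$ is defined in Definition \ref{D:Am} as a \emph{graded} ideal: $mA + A[A, A]$ is spanned by homogeneous elements ($mA$ is graded, and $a[b, c]$ is homogeneous for homogeneous $a, b, c$), so it is automatically a graded ideal and the graded and ungraded ideals generated by these generators coincide; with that remark added, the argument is airtight.
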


\section{Central extensions of $\slmn$ and cocycles}

\begin{De}
Let $L$ a Lie superalgebra and $\WW$ a $K$-free supermodule. A \emph{super 2-cocycle} is a $K$-bilinear map $\psi \colon L \times L \to \WW$ satisfying that
\begin{gather*}
\psi (x, y) = -\menosuno{x}{y} \psi(y, x), \\
\menosuno{x}{z} \psi([x, y], z) + \menosuno{x}{y} \psi([y, z], x) + \menosuno{y}{z}\psi([z, x], y) = 0, \\
\psi(x\scero, x\scero) = 0,
\end{gather*}
for all $x, y, z \in L$ and $x\scero \in L\scero$.
\end{De}

Given an even super 2-cocycle $\psi$, we can construct a central extension as we can see in \cite{Neh}. This central extension will be $L \oplus \WW$ with bracket $[(x, w_1), (y, w_2)] = \big([x, y], \psi(x, y)\big)$ and the morphism is given by the projection to $L$. We will see that if $L = \stmn$ and the super 2-cocycle is surjective, this construction can be described in a different way using relations.

\begin{De}
Let $\psi \colon \stmn \times \stmn \to \WW$ be an even super 2-cocycle, i.e. a super 2-cocycle such that $|\psi(x, y)| = |x|+|y|$ for homogeneous $x, y \in \stmn$. Let $\stmn\sh$ be the Lie superalgebra generated by the elements $\Fij(a)\sh$ with homogeneous $a \in A$ and $1 \leq i \neq j \leq m+n$ with degree $|\Fij\sh(a)| = |i| + |j| + |a|$ and by the elements of $\WW$, with the relations
\begin{align*}
{}& a \mapsto \Fij\sh(a) \text{ is a }K\text{-linear map,} \\
{}& [\WW, \WW] = [\Fij\sh(a), \WW] = 0, \\
{}& [\Fij\sh(a), \Fjk\sh(b)] = \Fik\sh(ab) + \psi\big( \Fij(a), \Fjk(b) \big) \text{ for distinct i, j, k,} \\
{}& [\Fij\sh(a), \Fkl\sh(b)] = \psi\big( \Fij(a), \Fkl(b) \big) \text{ for } i \neq j \neq k \neq l \neq i,
\end{align*}
where $a, b \in A$.
\end{De}

\begin{Le}
If $\stmn' = \stmn \oplus \WW$ is a central extension constructed from a surjective super 2-cocycle $\psi \colon \stmn \times \stmn \to \WW$ then there is an isomorphism $\rho \colon \stmn\sh \to \stmn'$ where $\rho\big(\Fij\sh(a)\big) = \Fij(a)$ and $\rho(w) = w$.
\end{Le}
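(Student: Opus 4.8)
The plan is to produce $\rho$ from the universal property of the presentation of $\stmn\sh$, and then to prove it is an isomorphism by exhibiting both $\stmn\sh$ and $\stmn'$ as central extensions of $\stmn$ by $\WW$ and invoking the short five lemma.

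First I would define $\rho$ on generators by $\rho\big(\Fij\sh(a)\big) = (\Fij(a), 0)$ and $\rho(w) = (0, w)$, and check that these images satisfy the four families of defining relations of $\stmn\sh$ inside $\stmn'$. $K$-linearity is immediate, and $\WW$ is central in $\stmn'$ because $\psi$ is bilinear, so $\psi(x, 0) = 0$ and the brackets $[\WW, \WW]$ and $[\Fij\sh(a), \WW]$ map to $0$. For the other two relations one computes in $\stmn'$ that $[(\Fij(a),0),(\Fjk(b),0)] = \big([\Fij(a),\Fjk(b)], \psi(\Fij(a),\Fjk(b))\big)$, which by \eqref{Eq:st2} equals $\big(\Fik(ab), \psi(\Fij(a),\Fjk(b))\big)$, exactly the image under $\rho$ of $\Fik\sh(ab) + \psi(\Fij(a),\Fjk(b))$; the disjoint-index relation reduces to \eqref{Eq:st3}, with vanishing first component and second component $\psi(\Fij(a),\Fkl(b))$. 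Hence $\rho$ is a well-defined homomorphism, and it is surjective: its image contains $0\oplus\WW$ and, correcting each bracket of the $(\Fij(a),0)$ by the central $\WW$-term it produces, also all of $\stmn\oplus 0$.

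The heart of the argument is injectivity, which I would obtain from the short five lemma, and the hard part will be exactness of the new extension at its middle term. First note that $\WW$ embeds in $\stmn\sh$, since $\rho$ restricts on $\WW$ to the identity inclusion $\WW \hookrightarrow \stmn'$, which is injective. Next, killing $\WW$ turns the defining relations of $\stmn\sh$ into precisely \eqref{Eq:st1}--\eqref{Eq:st3}, so there is a surjective homomorphism $\pi \colon \stmn\sh \to \stmn$ with $\pi\big(\Fij\sh(a)\big) = \Fij(a)$ and $\pi(\WW) = 0$. Since $\WW$ is central it is an ideal, so $\pi$ factors as $\pi = \bar\pi \circ q$ through the quotient map $q\colon \stmn\sh \to \stmn\sh/\WW$. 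The quotient $\stmn\sh/\WW$ satisfies exactly the presentation of $\stmn$, yielding a homomorphism $\alpha \colon \stmn \to \stmn\sh/\WW$ mutually inverse to $\bar\pi$. Thus $\bar\pi$ is an isomorphism, whence $\ker\pi = \ker q = \WW$, and $0 \to \WW \to \stmn\sh \xrightarrow{\pi} \stmn \to 0$ is a central extension.

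Finally I would assemble the commutative ladder whose rows are this extension and the given extension $0 \to \WW \to \stmn' \to \stmn \to 0$, with vertical maps $\mathrm{id}_{\WW}$, $\rho$ and $\mathrm{id}_{\stmn}$; commutativity of both squares is immediate from the formulas for $\rho$ and $\pi$. The short five lemma then forces $\rho$ to be an isomorphism, completing the proof. The only delicate point is the exactness of the top row at the middle term, which is exactly the identification $\stmn\sh/\WW \cong \stmn$ established above; everything else is a routine verification of relations.
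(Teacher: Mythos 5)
Your proof is correct, but it is organized differently from the paper's. The paper does not actually write out an argument: it defers wholesale to \cite[Lemma 1]{ChGu}, remarking only that the extra superalgebra grading changes nothing. The first half of your proposal coincides with what any proof must do and with what is done there: check that $\big(\Fij(a),0\big)$ and $w \mapsto (0,w)$ satisfy the defining relations of $\stmn\sh$ inside $\stmn' = \stmn \oplus \WW$, so that the presentation yields the epimorphism $\rho$. Where you genuinely diverge is in the bijectivity step: the Chen--Guay style argument proceeds by exhibiting an explicit inverse homomorphism on generators of $\stmn'$ (relying on the internal structure of the presented algebra, in the spirit of the decomposition in Lemma~\ref{L:dec}), whereas you first note that $\rho|_{\WW}$ is injective --- which settles the one genuinely delicate point of the presentation, namely that $\WW$ is not partially collapsed in $\stmn\sh$ --- then identify $\stmn\sh/\WW$ with $\stmn$ by playing the two universal properties against each other, obtaining an exact row $0 \to \WW \to \stmn\sh \xrightarrow{\pi} \stmn \to 0$, and conclude by the short five lemma applied to the ladder over $\mathrm{id}_{\WW}$ and $\mathrm{id}_{\stmn}$ (a module-level diagram chase, so valid for Lie superalgebras, and a bijective homomorphism is an isomorphism). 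This packaging buys independence from the $\sharp$-analogue of the decomposition lemma, which the paper only states after this result, and it makes visible exactly where centrality and the presentations are used; as a byproduct your argument also shows that surjectivity of $\psi$ is never needed for this particular lemma (it matters later, e.g.\ for the universality arguments), since $\WW$ lies in the image of $\rho$ by fiat rather than via cocycle values.
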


\begin{proof}
The proof is the same as in \cite[Lemma 1]{ChGu} with the only difference that we work with $\stmn$ and not with $\mathfrak{st}(n, 0, A)$, but adding the superalgebra structure does not make any difference.
\end{proof}

As we did before, we denote $\Hij\sh(a, b) = [\Fij\sh(a), \Fji\sh(b)]$ and $h\sh(a, b) = \Huj\sh(a, b) - \menosuno{a}{b} \Huj\sh(1, ba)$. We know that $h\sh$ is independent of $j$ and we have the analogue decomposition lemma.
\begin{Le}
We can decompose the Lie superalgebra $\stmn\sh$ generated by a surjective super 2-cocycle $\psi \colon \stmn \times \stmn \to \WW$ in the following way:
\[
\stmn\sh = \WW \ \oplus \ h\sh(A, A) \oplus \bigg( \bigoplus_{j = 2}^{m+n}\Huj\sh(1, A) \bigg) \bigoplus_{1 \leq i \neq j \leq m+ n} \Fij\sh(A),
\]
where $\Fij\sh(A)$, $h\sh(A, A)$ and $H_{1j}\sh(1, A)$ denote the subalgebras generated by $\Fij\sh(a)$, $h\sh(a, b)$ and $H_{1j}\sh(1, a)$, respectively,  for all $a, b \in A$.
\end{Le}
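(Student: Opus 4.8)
The plan is to transport everything through the isomorphism $\rho\colon\stmn\sh\to\stmn'=\stmn\oplus\WW$ of the previous lemma and reduce to the decomposition of $\stmn$ already obtained in Lemma \ref{L:dec}. Composing $\rho$ with the projection $\stmn\oplus\WW\to\stmn$ yields a surjective homomorphism $\pi\colon\stmn\sh\to\stmn$ with $\pi\big(\Fij\sh(a)\big)=\Fij(a)$, $\pi\big(\Huj\sh(1,a)\big)=\Huj(1,a)$ and $\pi\big(h\sh(a,b)\big)=h(a,b)$, and whose kernel is exactly $\WW$ (since $\WW$ is central and $\rho(w)=w$). Everything then follows from the interplay of $\pi$ with Lemma \ref{L:dec}. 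For spanning, let $M$ be the $K$-submodule generated by $\WW$, $h\sh(A,A)$, the $\Huj\sh(1,A)$ and the $\Fij\sh(A)$; then $\pi(M)$ contains $h(A,A)$, every $\Huj(1,A)$ and every $\Fij(A)$, so $\pi(M)=\stmn$ by Lemma \ref{L:dec}, and as $\WW=\Ker\pi\subseteq M$ we conclude $M=\stmn\sh$.

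For directness, suppose a vanishing sum $w+\eta+\sum_j\theta_j+\sum_{i\neq j}\xi_{ij}=0$ with terms in the respective summands. Applying $\pi$ and using that the decomposition in Lemma \ref{L:dec} is direct forces $\pi(\eta)=0$, $\pi(\theta_j)=0$ and $\pi(\xi_{ij})=0$ separately. For the $\Fij\sh$-summands, $\xi_{ij}=\Fij\sh(a)$ with $\Fij(a)=0$; since $\varphi\big(\Fij(a)\big)=E_{ij}(a)$ and $a\mapsto E_{ij}(a)$ is injective in $\slmn$, the map $a\mapsto\Fij(a)$ is injective and hence $\xi_{ij}=0$. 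For the $\Huj\sh$-summands, $\theta_j=\Huj\sh(1,a)$ with $\Huj(1,a)=0$ in $\stmn$; bracketing with $F_{1k}(c)$ and using \eqref{Eq:id2} gives $F_{1k}(ac)=0$, so $a=0$ by the previous injectivity (take $c=1$), and thus $\theta_j=0$. The relation therefore collapses to $w+\eta=0$ with $\eta\in h\sh(A,A)\cap\WW$, and it only remains to prove $h\sh(A,A)\cap\WW=0$.

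This last step is the main obstacle. Writing $\eta=\sum_k c_k h\sh(a_k,b_k)$ with $\sum_k c_k h(a_k,b_k)=0$ in $\stmn$, the task is to show $\eta=0$ in $\stmn\sh$; equivalently, that every $K$-linear relation among the $h(a_k,b_k)$ lifts to the same relation among the $h\sh(a_k,b_k)$. The difficulty is that $\eta$ is central, so no adjoint computation can detect it: under $\rho$ it becomes $(0,\Gamma)$ with
\[
\Gamma=\sum_k c_k\Big(\psi\big(F_{1j}(a_k),F_{j1}(b_k)\big)-\menosuno{a_k}{b_k}\psi\big(F_{1j}(1),F_{j1}(b_ka_k)\big)\Big),
\]
and one must prove $\Gamma=0$. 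I expect to obtain this from the super $2$-cocycle identity together with the lifted form of \eqref{Eq:id7}, namely $[h\sh(a,b),F_{1i}\sh(c)]=F_{1i}\sh\big((ab-\menosuno{a}{b}ba)c\big)+\psi\big(h(a,b),F_{1i}(c)\big)$: the defining relations of $\stmn$ that produce $\sum_k c_k h(a_k,b_k)=0$ lift to $\stmn\sh$ up to central corrections measured by $\psi$, and the cocycle condition forces these corrections to cancel, yielding $\Gamma=0$. This is the only place where the cocycle axioms are used beyond the well-definedness of $\rho$, and I expect it to carry all the genuine computational weight of the lemma.
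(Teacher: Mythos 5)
Your reduction through the isomorphism $\rho$ is a genuinely different organization from the paper's: the paper disposes of this lemma in one line, asserting that the proof of Lemma \ref{L:dec} (i.e.\ the Chen--Sun argument, rerun on the $\sharp$-presentation) goes through unchanged, whereas you try to quote Lemma \ref{L:dec} as a black box and lift its conclusion. The formal parts of your argument are fine: spanning via $\pi(M)=\stmn$ and $\Ker\pi=\WW\subseteq M$, and the vanishing of the $\Fij\sh$- and $\Huj\sh(1,\cdot)$-components (note that $\Huj\sh(1,0)=0$ by bilinearity, so those terms die even though $\rho\big(\Huj\sh(1,a)\big)$ may a priori carry a $\WW$-component $\psi\big(F_{1j}(1),F_{j1}(a)\big)$). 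But the proposal is incomplete exactly where you flag it: the claim $h\sh(A,A)\cap\WW=0$, i.e.\ $\Gamma=0$, is never proved, and the mechanism you propose cannot prove it. Identity \eqref{Eq:id7} and its $\sharp$-lift are statements about $\ad h\sh(a,b)$, and $\ad$ annihilates the centre; so, as you yourself observe, no amount of bracketing can detect the central discrepancy $\Gamma$, and it is unclear how the cocycle identity alone, applied to such bracket relations, would force it to vanish. What is actually required is a generating set for the $K$-module of relations $\sum_k c_k h(a_k,b_k)=0$ in $\stmn$ --- essentially the identification of $h(A,A)$ and $\Ker\varphi\cong\hc_1(A)$ in \cite{ChSu} --- together with a recheck that the Jacobi manipulations establishing each generating relation in $\stmn$ leave no $\psi$-residue when repeated in $\stmn\sh$. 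That recheck is the whole computational content of the lemma; it is what the paper's ``the proof is the same as for Lemma \ref{L:dec}'' refers to, and your proposal replaces it with an expectation.

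Beware also of a circularity lurking in the natural shortcut: for an arbitrary surjective even super 2-cocycle one would like to say that $\psi$ is cohomologous to a standard cocycle (for which $\Gamma\equiv0$ trivially), or to factor through the universal central extension; but those statements amount to knowing $\Ho_2\big(\stmn\big)$, which is precisely what the paper is computing, so they cannot be invoked at this stage. A clean way to close your argument within the paper's logic is to observe that the lemma is only ever applied to the cocycles constructed in Sections 5 and 6, which by construction vanish whenever an argument lies in $\mathcal{H}$ and on all pairs $\big(\Fij(a),\Fkl(b)\big)$ with a repeated index; for such $\psi$ one has $\Gamma(a,b)=\psi\big(F_{1j}(a),F_{j1}(b)\big)-\menosuno{a}{b}\psi\big(F_{1j}(1),F_{j1}(ba)\big)=0$ identically, the corrections $\psi\big(F_{1j}(1),F_{j1}(a)\big)$ vanish as well, and your reduction then closes immediately. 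As written, for a general $\psi$, the decisive step is missing.
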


\begin{proof}
The proof is the same as for Lemma \ref{L:dec}.
\end{proof}

\section{Universal central extension of $\stdu$}

First of all we study the case when $m + n = 3$ and see that $\stdu$ is itself the universal central extension of $\mathfrak{sl}(2, 1, A)$.

\begin{Th}\label{T:du}
If $\tau \colon \widetilde{\mathfrak{st}}(2, 1, A) \to \stdu$ is a central extension, then there exists a unique section $\eta \colon \stdu \to \widetilde{\mathfrak{st}}(2, 1, A)$.
\end{Th}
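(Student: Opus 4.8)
The plan is to build $\eta$ by lifting the Steinberg generators and checking that the lifts satisfy the defining relations \eqref{Eq:st1}--\eqref{Eq:st3}, so that the universal property of $\stdu$ furnishes the homomorphism; uniqueness will then come for free from perfectness. For $(m,n)=(2,1)$ the index set is $\{1,2,3\}$ with $1,2$ even and $3$ odd, and the key combinatorial feature is that a pair $(i,j)$ with $i\neq j$ determines the remaining index $k$ uniquely. First I would choose, for each generator, some preimage $u_{ij}(a)\in\widetilde{\mathfrak{st}}(2,1,A)$ with $\tau\big(u_{ij}(a)\big)=\Fij(a)$, and then set the canonical lift
\[
\widetilde{F}_{ij}(a) := [\,u_{ik}(a),\, u_{kj}(1)\,], \qquad k\text{ the remaining index}.
\]
Since $\ker\tau$ is central, a bracket of preimages is insensitive to the ambiguity in each preimage, so $\widetilde{F}_{ij}(a)$ does not depend on the choices. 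The same centrality argument yields \eqref{Eq:st1}: as $a\mapsto\Fij(a)$ is $K$-linear in the base, the preimages of $\Fik(a+a')$ and of $\Fik(a)+\Fik(a')$ differ by a central element, which drops out of the defining bracket; linearity in scalars is identical. Finally $\tau\big(\widetilde{F}_{ij}(a)\big)=[\Fik(a),\Fkj(1)]=\Fij(a)$ by \eqref{Eq:st2}.

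Next I would verify \eqref{Eq:st2} and \eqref{Eq:st3} for the lifts. In every instance $\tau$ sends both sides of the putative relation to the same element of $\stdu$, so the two sides differ by an element of the central kernel; the real content is to show this central defect vanishes. The mechanism is the super Jacobi identity in $\widetilde{\mathfrak{st}}(2,1,A)$ combined with the vanishing of all brackets against central elements: rewriting each $\widetilde{F}_{ij}(a)$ as a commutator and expanding lets one identify the two sides with a common commutator expression. To organize this it is convenient to introduce $\widetilde{H}_{ij}(a,b)=[\widetilde{F}_{ij}(a),\widetilde{F}_{ji}(b)]$ and to establish the tilded analogues of \eqref{Eq:id1}--\eqref{Eq:id6} of Lemma \ref{L:ident}, from which \eqref{Eq:st2} and the various cases of \eqref{Eq:st3} follow. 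Granting the relations, the presentation of $\stdu$ gives a homomorphism $\eta$ with $\eta\big(\Fij(a)\big)=\widetilde{F}_{ij}(a)$, and $\tau\circ\eta=\mathrm{id}$ holds on generators, hence everywhere.

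For uniqueness, suppose $\eta,\eta'$ are two sections and put $\delta=\eta-\eta'$. Since $\tau\circ\delta=0$, the map $\delta$ takes values in the central ideal $\ker\tau$, so $[\delta(x),\eta(y)]=[\eta(x),\delta(y)]=[\delta(x),\delta(y)]=0$ for homogeneous $x,y$; expanding $[\eta'(x),\eta'(y)]=[\eta(x)-\delta(x),\,\eta(y)-\delta(y)]$ by bilinearity then gives $\delta([x,y])=0$. Because $\stmn$ is perfect, every element of $\stdu$ is a sum of brackets, whence $\delta\equiv 0$ and $\eta=\eta'$.

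I expect the main obstacle to be the verification of \eqref{Eq:st2} and the overlapping cases of \eqref{Eq:st3}, where $\{i,j\}$ and $\{k,l\}$ share at least one index. With only three indices available one cannot introduce a spectator fourth index to linearize the computation as one does for $m+n\geq 5$, so the central defect must be driven to zero directly, relying on the relations produced by the odd index $3$ and on careful bookkeeping of the signs $(-1)^{|\cdot||\cdot|}$ throughout. This is exactly where the specific structure of $(2,1)$, rather than a generic rank, is used.
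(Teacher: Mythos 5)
Your architecture is the same as the paper's: lift the generators, normalize the lifts, check the Steinberg relations \eqref{Eq:st1}--\eqref{Eq:st3} via Jacobi identity plus centrality, obtain $\eta$ from the presentation, and get uniqueness from perfectness (your uniqueness paragraph is complete and correct). Your normalization $\tfij(a):=[u_{ik}(a),u_{kj}(1)]$ is even exactly the paper's lift: the paper replaces a linear preimage $u_{ij}(a)$ by $[\thik(1,1),u_{ij}(a)]$, and since $u_{ij}(a)$ and $[u_{ik}(a),u_{kj}(1)]$ differ by a central element, a Jacobi expansion whose coefficients combine as $\big(1+(-1)^{|i|+|k|}\big)-(-1)^{|i|+|k|}=1$ shows $[\thik(1,1),u_{ij}(a)]=[u_{ik}(a),u_{kj}(1)]$. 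The genuine gap is in the existence half: verifying \eqref{Eq:st2} and the overlapping cases of \eqref{Eq:st3} \emph{is} the theorem, and your plan defers it to ``the tilded analogues of \eqref{Eq:id1}--\eqref{Eq:id6}''. As exact identities those analogues are not available at that stage: in the paper, Lemma \ref{L:ident} is itself deduced from the relations \eqref{Eq:st1}--\eqref{Eq:st3}, so invoking its tilded form to prove those very relations for the lifts is circular. Centrality gives you for free only the mod-$\VV$ versions, and converting mod-$\VV$ identities into exact ones is precisely the problem; your closing paragraph (``the central defect must be driven to zero directly'') names the obstacle without removing it.

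The concrete mechanism you are missing is the following. The normalization yields the \emph{exact} identity $[\thik(1,1),\tfij(a)]=\tfij(a)$, and since $\ad\thik(1,1)$ annihilates $\VV$, it upgrades mod-$\VV$ information to exact equalities: for \eqref{Eq:st2} one writes $\tfij(ab)=\big[\thik(1,1),[\tfik(a),\tfkj(b)]\big]$ (the central defect between $\tfij(ab)$ and $[\tfik(a),\tfkj(b)]$ is killed by $\ad\thik(1,1)$) and the Jacobi expansion closes with total coefficient $\big(1+(-1)^{|i|+|k|}\big)-(-1)^{|i|+|k|}=1$, valid for either parity, so no spectator fourth index is needed. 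The case $[\tfij(a),\tfij(b)]=0$ is easy, since both Jacobi terms contain a bracket of lifts projecting to zero, hence central. For the decisive case $[\tfij(a),\tfik(b)]$, note this element is itself central, hence $0=\big[\thij(1,1),[\tfij(a),\tfik(b)]\big]$; expanding, when $|i|+|j|=\uno$ the term $[\thij(1,1),\tfij(a)]$ is congruent to $\tfij\big((1+(-1)^{\uno})a\big)=0$ modulo $\VV$, and that central residue dies upon re-bracketing with $\tfik(b)$, while the remaining term is exactly $[\tfij(a),\tfik(b)]$, forcing it to vanish; when $|i|+|j|=\cero$ one necessarily has $|i|+|k|=\uno$ (exactly one of the three indices is odd) and the same computation runs with $\thik(1,1)$ in place of $\thij(1,1)$. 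This parity dichotomy --- for the three indices of $\stdu$, at least one of the pairs $\{i,j\}$, $\{i,k\}$ is odd, so the operator $\ad\widetilde{H}(1,1)$ of the odd pair acts as $1+(-1)^{\uno}=0$ on its own root space while fixing the other --- is the precise way the $(2,1)$ grading enters, i.e.\ the actual content behind your phrase ``relations produced by the odd index $3$''. Without it, your proposal is a correct frame around an unproved core.
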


\begin{proof}
To obtain this result we will directly construct a Lie superalgebra homomorphism $\eta \colon \stdu \to \widetilde{\mathfrak{st}}(2, 1, A)$, such that $\tau \circ \eta = \text{id}$ and since $\stdu$ is perfect it is unique.
Let
\[
\xymatrix{
0 \ar[r] & \VV \ar[r] & \widetilde{\mathfrak{st}}(2, 1, A) \ar[r]^\tau & \stdu \ar[r] & 0
}
\]
be a central extension. For any $\Fij(a)$ where $a \in A$ is in the $K$-basis, we choose a preimage denoted by $\tfij(a)$ and extend it by $K$-linearity to all $a \in A$.

We define $\thij(a, b) = [\tfij(a), \tfji(b)]$, since it is independent of the choice of $\tfij(a)$.
By identity \eqref{Eq:id1} we know that $[\thik(1, 1), \tfij(a)] = \tfij(a) + v_{ij}(a)$, where $v_{ij}(a) \in \VV$ so we will replace $\tfij(a)$ by $\tfij(a) + v_{ij}(a)$.

Let us see that these $\tfij(a)$ follow the relations \eqref{Eq:st1}--\eqref{Eq:st3} of the definition of the Steinberg Lie superalgebra. If it is true, our $K$-linear section $\eta \colon \stdu \to \widetilde{\mathfrak{st}}(2, 1, A)$, $\Fij(a) \mapsto \tfij(a)$ will be a Lie superalgebra homomorphism and the result is proved. The first relation is immediate by definition.

To see the second one, we use the Jacobi identity and the fact that $\VV$ is in the centre of $\widetilde{\mathfrak{st}}(2, 1, A)$.
\begin{align*}
\tfij(ab) &= [\thik(1, 1), \tfij(ab)] = \big[\thik(1, 1), [\tfik(a), \tfkj(b)]\big] \\
        {}&= \big[[\thik(1, 1), \tfik(a)], \tfkj(b)\big] + \big[\tfik(a), [\thik(1, 1), \tfkj(b)]\big] \\
        {}&= [\tfik(a + (-1)^{|i| + |k|}a ), \tfkj(b)] + [\tfik(a), -(-1)^{(|i|+|k|)(|i|+|k|)} \tfkj(b)] \\
        {}&= [\tfik(a), \tfkj(b)].
\end{align*}

Now we will see that the remaining brackets are equal to zero.

\begin{align*}
[\tfij(a), \tfij(b)] &= \big[\tfij(a), [\tfik(b), \tfkj(1)] \big] \\
				   {}&= \big[[\tfij(a), \tfik(b)], \tfkj(1) \big] \\
				   {}&\quad \ + (-1)^{(|i|+|j|+|a|)(|i|+|k|+|b|)} \big[\tfik(b), [\tfij(a), \tfkj(1)] \big] = 0.
\end{align*}

To see $[\tfij(a), \tfik(b)]$ we suppose that $|i| + |j| = \uno$, then
\begin{align*}
0 &= (-1)^{\uno + |a|} \big[\thij(1, 1), [\tfij(a), \tfik(b)]\big] \\
{}&= (-1)^{\uno + |a|}\big[[\thij(1, 1), \tfij(a)], \tfik(b)\big] \\
{}&\quad \ + (-1)^{(\uno + |a|)+(|i|+|j|)(|i| + |j| + |a|)}\big[\tfij(a), [\thij(1, 1), \tfik(b)]\big] \\
{}&= (-1)^{\uno + |a|}[\tfij(a + (-1)^{\uno}a), \tfik(b)] + [\tfij(a), \tfik(b)] = [\tfij(a), \tfik(b)].
\end{align*}

If $|i| + |j| = \cero$, we have that $|i| + |k| = \uno$ and the calculation is the same, proving that $[\tfij(a), \tfkl(b)] = 0$ if $j \neq k$ and $i \neq l$, satisfying relation \eqref{Eq:st3} and completing the proof.
\end{proof}

\begin{Co}
The universal central extension of $\mathfrak{sl}(2, 1, A)$ and $\stdu$ is $\stdu$. Moreover, $\Ho_2\big(\stdu) = 0$.
\end{Co}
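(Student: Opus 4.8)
The plan is to deduce everything formally from Theorem~\ref{T:du} together with the general theory of universal central extensions for perfect Lie superalgebras developed in \cite{Neh} (the super analogue of Garland's theory). The criterion I would invoke is the following: a perfect Lie superalgebra admits a universal central extension, a central extension $u\colon U \to L$ is universal if and only if $U$ is perfect and every central extension of $U$ splits, and in that case the kernel of the universal central extension of $L$ equals $\Ho_2(L)$.

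First I would record that $\stdu$ is perfect, which holds for every $\stmn$ as recalled in Section~2. Theorem~\ref{T:du} says precisely that an arbitrary central extension $\tau\colon \widetilde{\mathfrak{st}}(2,1,A) \to \stdu$ admits a (unique) section $\eta$ with $\tau \circ \eta = \mathrm{id}$; that is, every central extension of $\stdu$ splits. Applying the criterion to the trivial central extension $\mathrm{id}\colon \stdu \to \stdu$ (which is central, with kernel $0$), perfectness of $\stdu$ together with this splitting property identifies $\mathrm{id}\colon \stdu \to \stdu$ as the universal central extension of $\stdu$; in other words $\stdu$ is centrally closed and is its own universal central extension. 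Since the kernel of the universal central extension of a perfect Lie superalgebra equals its second homology, and here that kernel is $0$, I obtain at once $\Ho_2\big(\stdu\big) = 0$.

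Finally, for $\mathfrak{sl}(2,1,A)$ I would use the canonical epimorphism $\varphi\colon \stdu \to \mathfrak{sl}(2,1,A)$, which is a central extension: its kernel is isomorphic to $\hc_1(A)$ and is central, as recalled in Section~2 for $m+n \geq 3$. Because the source $\stdu$ is perfect and every one of its central extensions splits, the same criterion (equivalently \cite[Corollary~1.9]{Neh}) shows that $\varphi$ is the universal central extension of $\mathfrak{sl}(2,1,A)$. Hence the universal central extension of both $\mathfrak{sl}(2,1,A)$ and $\stdu$ is $\stdu$.

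No genuine computation remains at this stage: the entire content has already been absorbed into Theorem~\ref{T:du}, and the corollary is a formal application of the universal-central-extension machinery. The only point needing care is invoking the general theory correctly, namely ensuring that perfectness of $\stdu$ is in place (so the universal central extension exists with kernel $\Ho_2$) and that the splitting of \emph{all} central extensions is exactly the condition identifying $\mathrm{id}\colon \stdu \to \stdu$ and $\varphi$ as the respective universal central extensions.
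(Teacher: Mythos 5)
Your proposal is correct and follows essentially the same route as the paper: the corollary is a formal consequence of Theorem~\ref{T:du} (every central extension of $\stdu$ splits), combined with perfectness of $\stdu$ and the standard recognition criterion for universal central extensions of perfect Lie superalgebras from \cite{Neh}, with the transfer to $\mathfrak{sl}(2,1,A)$ via the central extension $\varphi$ being exactly the appeal to \cite[Corollary~1.9]{Neh} that the paper makes in Section~2.
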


\section{Universal central extension of $\sttu$}\label{S:sttu}

We move now to the case when $m + n = 4$. We will find the universal central extension of $\sttu$ which will be the same as the universal central extension of $\mathfrak{sl}(3, 1, A)$.

Let $S_4$ the symmetric group of $\{1, 2, 3, 4\}$, i.e. the set of all quadruples $(i, j, k, l)$ where $1\leq i, j, k, l \leq 4$ distinct. We quotient $S_4$ by Klein's subgroup, formed by $\{ (1, 2, 3, 4), (3, 2, 1, 4), (1, 4, 3, 2), (3, 4, 1, 2)  \}$, obtaining 6 cosets denoted by $P_m$. We have a map $\theta$ that sends $(i, j, k, l)\mapsto \theta\big((i, j, k, l)\big) = m$ when $(i, j, k, l) \in P_m$.

Let $\Pi(A_2)$ be the $K$-supermodule $A_2$ (see Definition \ref{D:Am}) with the parity changed, i.e.,  $\big(\Pi(A_2) \big)\scero = (A_2)\suno$ and $\big(\Pi(A_2) \big)\suno = (A_2)\scero$.
Let $\WW = \Pi(A_2)^6$ be the $K$-supermodule formed by the direct sum of six copies of $\Pi(A_2)$ and let be the maps $\epsilon_m \colon \Pi(A_2) \to \WW$, $\epsilon_m(\bar{a}) \mapsto(0, \dots, \bar{a}, \dots, 0)$ in the position $m$.

Using the decomposition of Lemma \ref{L:dec} we build a $K$-bilinear map, defining it in the elements of the $K$-basis and extending it by $K$-linearity,
\[
\psi \colon \sttu \times \sttu \to \WW,
\]
where
\begin{align*}
{}&\psi\big(\Fij(a), \Fkl(b)\big) = \epsilon_{\theta\big((i, j, k, l)\big)}(\overline{ab}), \\
{}&\psi(x, y) = 0 \text{ if } x \text{ or } y \text{ belong to } \mathcal{H}.
\end{align*}

\begin{Le}
The $K$-bilinear map $\psi$ is a super 2-cocycle.
\end{Le}

\begin{proof}
Since the grading in $\WW$ is changed and one and just one index is odd, we have that
\[
|\psi\big(\Fij(a), \Fkl(b)\big)| = |i| + |j| + |a| + |k| + |l| + |b| = |a| + |b| + \uno = | \epsilon_{\theta\big((i, j, k, l)\big)}(\overline{ab})|,
\]
for homogeneous $a, b \in A$, so $\psi$ is even.

To complete the proof we can just follow the steps of \cite[Lemma 2.2]{GaSh} since $\bar{a} = -\bar{a}$ so the signs do not have any importance.
\end{proof}

By the previous lemma, we have a central extension
\[
\xymatrix{
0 \ar[r] & \WW \ar[r] & \sttu\sh \ar[r]^{\pi} & \sttu \ar[r] & 0,
}
\]
where $\sttu\sh = \sttu \oplus \WW$ is the Lie superalgebra constructed by the surjective super 2-cocycle $\psi$, defined by the following relations
\begin{align}
{}& a \mapsto \Fij\sh(a) \text{ is a }K\text{-linear map,} \label{Eq:sh1} \\
{}& [\WW, \WW] = [\Fij\sh(a), \WW] = 0, \label{Eq:sh2} \\
{}& [\Fij\sh(a), \Fjk\sh(b)] = \Fik\sh(ab) \text{ for distinct i, j, k,} \label{Eq:sh3} \\
{}& [\Fij\sh(a), \Fij\sh(a)] = 0, \label{Eq:sh4} \\
{}& [\Fij\sh(a), \Fik\sh(b)] = 0, \label{Eq:sh5} \\
{}& [\Fij\sh(a), \Fkl\sh(b)] = \epsilon_{\theta\big((i, j, k, l)\big)}(\overline{ab}) \text{ for distinct } i, j, k, l. \label{Eq:sh6}
\end{align}

\begin{Th}\label{T:sttu}
The central extension $0 \to \WW \to \sttu\sh \to \sttu$ is universal.
\end{Th}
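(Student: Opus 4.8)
The plan is to apply the standard criterion for universality (see \cite{Neh}): a central extension $K \to L$ is universal precisely when $K$ is perfect and \emph{centrally closed}, meaning that every central extension of $K$ splits. Thus it suffices to prove that $\sttu\sh$ is perfect and centrally closed. Perfectness is immediate: since $A$ is unital, setting $b = 1$ in relation \eqref{Eq:sh6} gives $\epsilon_m(\bar a) = [\Fij\sh(a), \Fkl\sh(1)]$ for any disjoint quadruple with $\theta((i,j,k,l)) = m$, so every generator of $\WW$ is a bracket, and $\Fij\sh(a) = [\Fik\sh(a), \Fkj\sh(1)]$ by \eqref{Eq:sh3}; hence $\sttu\sh = [\sttu\sh, \sttu\sh]$. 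In particular $\WW \subseteq [\sttu\sh, \sttu\sh]$, so $\sttu\sh$ is already generated by the elements $\Fij\sh(a)$ alone.

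For central closedness, let $0 \to V \to \mathcal{E} \xrightarrow{\tau} \sttu\sh \to 0$ be a central extension and imitate the proof of Theorem \ref{T:du}. Choose a preimage $\tfij\sh(a) \in \mathcal{E}$ for each basis element $\Fij\sh(a)$, extend by $K$-linearity, and put $\thik\sh(1,1) = [\tfik\sh(1), \tfki\sh(1)]$. The analogue of \eqref{Eq:id2} holds in $\sttu\sh$ as a consequence of \eqref{Eq:sh3} and \eqref{Eq:sh5} (the Jacobi expansion involves only overlapping indices, so no $\WW$-term appears), giving $[\Hik\sh(1,1), \Fij\sh(a)] = \Fij\sh(a)$; lifting, $[\thik\sh(1,1), \tfij\sh(a)] = \tfij\sh(a) + v_{ij}(a)$ with $v_{ij}(a) \in V$, and we renormalize by replacing $\tfij\sh(a)$ with $[\thik\sh(1,1), \tfij\sh(a)]$. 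Because $\sttu\sh$ is generated by the $\Fij\sh(a)$, it now suffices to verify that the normalized lifts satisfy the defining relations \eqref{Eq:sh1}--\eqref{Eq:sh6}; the assignment $\Fij\sh(a) \mapsto \tfij\sh(a)$ then extends to a homomorphism $\eta$ with $\tau \circ \eta = \mathrm{id}$, and uniqueness follows from perfectness. Relation \eqref{Eq:sh1} holds by construction, while \eqref{Eq:sh3}, \eqref{Eq:sh4} and \eqref{Eq:sh5} --- all involving overlapping indices --- follow from the Jacobi identity together with the normalization, exactly as in Theorem \ref{T:du}.

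The heart of the matter, and the step I expect to be the main obstacle, is relation \eqref{Eq:sh6} for a disjoint quadruple $\{i,j,k,l\} = \{1,2,3,4\}$. One must show that $[\tfij\sh(a), \tfkl\sh(b)]$ is central in $\mathcal{E}$ and yields a well-defined lift of $\epsilon_{\theta((i,j,k,l))}(\overline{ab})$; equivalently, that it depends only on $\overline{ab} \in A_2 = A/\III_2$ and only on the Klein coset $\theta((i,j,k,l))$. The mechanism is to rewrite the disjoint commutator through one of the four indices already present --- there is no fifth index to spare --- for instance by replacing $\tfkl\sh(b)$ with $[\tfki\sh(1), \tfil\sh(b)]$ and expanding $[\tfij\sh(a), [\tfki\sh(1), \tfil\sh(b)]]$ by Jacobi. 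One term collapses because $[\tfij\sh(a), \tfil\sh(b)]$ is central (relation \eqref{Eq:sh5}), and the other reduces $[\tfij\sh(a), \tfkl\sh(b)]$ to another disjoint commutator $\pm[\tfkj\sh(a), \tfil\sh(b)]$; since the quadruple $(k,j,i,l)$ lies in the same Klein coset as $(i,j,k,l)$, this is exactly the required coset-invariance, and running the analogous rewriting along the remaining routes forces the bracket to annihilate $2a$ and $ab - \menosuno{a}{b}ba$, i.e.\ to factor through $\III_2$. This is precisely where small rank bites: for $m+n \geq 5$ a spare index lets one write every disjoint commutator as an iterated bracket that vanishes, so $\WW$ does not appear, whereas here the four indices must be recycled and the bracket genuinely survives. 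The sign bookkeeping across the different rewritings is delicate, but it is defused by the relation $\bar a = -\bar a$ in $A_2$, which renders the signs irrelevant --- the same simplification already exploited in verifying that $\psi$ is a cocycle.
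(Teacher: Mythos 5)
Your strategy is sound and genuinely differs from the paper's in its framing, while sharing the same computational engine. The paper proves universality directly from the definition: given an \emph{arbitrary} central extension $\tau \colon \widetilde{\mathfrak{st}}(3, 1, A) \to \sttu$ of the \emph{base} algebra, it lifts and normalizes the generators $\Fij(a)$ and constructs $\rho \colon \sttu\sh \to \widetilde{\mathfrak{st}}(3, 1, A)$ with $\tau \circ \rho = \pi$ by verifying \eqref{Eq:sh1}--\eqref{Eq:sh6}; the crux is exactly your list of conditions, which appears there as (R1)--(R4): $2v_{ijkl}(a) = 0$, Klein-coset invariance, $v_{ijkl}(a[b,c]) = 0$, and $[\tfij(a), \tfkl(b)] = v_{ijkl}(ab)$, proved by the same index-recycling rewritings you describe (your reduction of $[\tfij\sh(a), \tfkl\sh(b)]$ to $\pm[\tfkj\sh(a), \tfil\sh(b)]$ is the paper's $v_{ijkl}(a) = v_{ilkj}(a)$ step, and the relations killing $2a$ and $a[b,c]$ come, as in the paper, from bracketing with $\thij(a,b)$ for a pair with $|i|+|j| = \cero$, which always exists in the $(3,1)$ grading). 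You instead invoke the recognition criterion from \cite{Neh} (perfect plus centrally closed implies universal) and split an arbitrary central extension of $\sttu\sh$ \emph{itself}. What your route buys: perfectness of $\sttu\sh$ is immediate, only one splitting need be built, and uniqueness is automatic. What the paper's route buys: since its lifts live over $\sttu$ rather than $\sttu\sh$, every disjoint commutator $[\tfij(a), \tfkl(b)]$ lies over $0$ and is central for trivial reasons, so no auxiliary centrality lemma is ever needed.

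That lemma is the one obligation you state but do not discharge, and in your setting it is needed earlier than where you place it: already in verifying \eqref{Eq:sh5}, and in showing the normalization terms are independent of the middle index, the Jacobi expansions produce brackets such as $[\tfij\sh(a), \tflk\sh(1)]$ lying over the \emph{nonzero} central element $\epsilon_{\theta((i,j,l,k))}(\bar{a}) \in \WW$, and you must know these are central in $\mathcal{E}$ before you may drop them. Fortunately this follows in two lines from the perfectness you already established: for $x \in \tau^{-1}(\WW)$ and $y, z \in \mathcal{E}$ one has $[x, y], [x, z] \in V$, hence $[x, [y, z]] = 0$ by the Jacobi identity, and $\mathcal{E} = [\mathcal{E}, \mathcal{E}] + V$ because $\sttu\sh$ is perfect, so $[x, \mathcal{E}] = 0$. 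With this inserted, the preimage of $\WW$ is central, all $\WW$-corrections behave exactly like the $\VV$-corrections in the paper's proof, your verification of \eqref{Eq:sh6} (well-definedness on $A_2$ and on Klein cosets) transplants the paper's (R1)--(R4) computations verbatim, and the argument is complete; your closing remark that $\bar{a} = -\bar{a}$ in $A_2$ neutralizes the sign bookkeeping is precisely the simplification the paper itself exploits.
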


\begin{proof}
Let
\[
\xymatrix{
0 \ar[r] & \VV \ar[r] & \widetilde{\mathfrak{st}}(3, 1, A) \ar[r]^{\tau} & \sttu \ar[r] & 0
}
\]
be a central extension. We need to show that there exists a Lie superalgebra homomorphism $\rho \colon \sttu\sh \to \widetilde{\mathfrak{st}}(3, 1, A)$ such that $\tau \circ \rho = \pi$.

We choose a preimage $\tfij(a)$ of $\Fij(a)$ as we did in Theorem \ref{T:du}, choosing preimages where $a \in A$ are the elements of the $K$-basis and extending them by $K$-linearity. Since $\VV \subset \Zenter(\widetilde{\mathfrak{st}}(3, 1, A))$, we have that
\[
[\tfik(a), \tfkj(b)] = \tfij(ab) + v_{ijk}(a, b),
\]
for distinct $i, j, k$, where $v_{ijk}(a, b) \in \VV$. Using Jacobi identity we have
\begin{align*}
[\tfik(a), \tfkj(cb)] &= \big[\tfik(a), [\tfkl(c), \tflj(b)]\big] \\
{}&= \big[[\tfik(a), \tfkl(c)], \tflj(b)\big] \\
{}&\quad \ + (-1)^{(|i|+|k|+|a|)(|k|+|l|+|c|)}\big[\tfkl(c), [\tfik(a), \tflj(b)]\big] \\
{}&= [\tfil(ac), \tflj(b)],
\end{align*}
so choosing $c = 1$ we have the identities $v_{ijk}(a, b) = v_{ijl}(a, b)$ and $[\tfik(a), \tfkj(b)] = [\tfil(a), \tflj(b)]$. This means that $v_{ijk}(a, b)$ is independent of the choice of $k$ so we have
\[
[\tfik(a), \tfkj(b)] = \tfij(ab) + v_{ij}(a, b),
\]
and
\[
[\tfik(1), \tfkj(b)] = \tfij(b) + v_{ij}(1, b).
\]

Now we replace $\tfij(b)$ by $\tfij(b) + v_{ij}(1, b)$. We want to define $\rho\big(\Fij\sh(a)\big) = \tfij(a)$ so will see that these elements satisfy relations \eqref{Eq:sh1}--\eqref{Eq:sh6}.

Relations \eqref{Eq:sh1}, \eqref{Eq:sh2} and \eqref{Eq:sh3} are straightforward by definition. To see relation \eqref{Eq:sh4}, we choose $i, j, k$ distinct
\begin{align*}
[\tfij(a), \tfij(b)] &= \big[\tfij(a),[\tfik(b), \tfkj(1)] \big] \\
{}&= \big[[\tfij(a),\tfik(b)], \tfkj(1) \big] \\
{}&\quad \ + (-1)^{(|i|+|j|+|a|)(|i|+|k|+|b|)}\big[\tfik(b),[\tfij(a), \tfkj(1)] \big] \\
{}&= 0.
\end{align*}

For relation \eqref{Eq:sh5}, taking $i, j, k, l$ distinct, we have
\begin{align*}
[\tfij(a), \tfik(b)] &= \big[\tfij(a), [\tfil(b), \tfik(1)]\big] \\
{}&= \big[[\tfij(a), \tfil(b)], \tfik(1)\big] \\
{}&\quad \ + (-1)^{(|i|+|j|+|a|)(|i|+|l|+|b|)}\big[\tfil(b), [\tfij(a), \tfik(1)]\big] \\
{}&= 0.
\end{align*}

For relation \eqref{Eq:sh6} it is more complicated. We define $\thij(a, b) = [\tfij(a), \tfji(b)]$ and following the steps of Lemma \ref{L:ident} we can check that for distinct $i, j, k, l$,
\begin{align*}
{}&\thij(a, b) = -(-1)^{(|i|+|j|+|a|)(|i|+|j|+|b|)}\thji(b, a), \\
{}&[\thij(a, b), \tfik(c)] = \tfik(abc), \\
{}&[\thij(a, b), \tfki(c)] = -(-1)^{(|a| + |b|)(|i| + |k| + |c|)} \tfki(cab), \\
{}&[\thij(a, b), \tfkj(c)] = (-1)^{(|i|+|j|+|a|)(|i|+|j|+|b|) + (|a|+|b|)(|j|+|k|+|c|)}\tfkj(cba), \\
{}&[\thij(a, b), \tfij(c)] = \tfij(abc + (-1)^{(|i| + |j| + |a||b| + |b||c| + |c||a|)}cba), \\
{}&[\thij(a, b), \tfkl(c)] = 0.
\end{align*}

When $i, j, k, l$ are distinct we denote
\[
[\tfij(a), \tfkl(1)] = v_{ijkl}(a),
\]
where $v_{ijkl}(a) \in \VV$.  We want to define the homomorphism from $\WW$ by the expression $\rho(\epsilon_{\theta\big((i, j, k, l)\big)}(\overline{ab})) = v_{ijkl}(ab)$, since
\begin{align*}
[\tfij(a), \tfkl(b)] &= [\rho\big(\Fij\sh(a)\big), \rho\big(\Fkl\sh(b)\big)]  \\
{}&= \rho\big([\Fij\sh(a), \Fkl\sh(b)]) = \rho(\epsilon_{\theta\big((i, j, k, l)\big)}(\overline{ab})) = v_{ijkl}(ab).
\end{align*}
Thus, we have to check that
\begin{enumerate}
\item[(R1)] $2v_{ijkl}(a) = 0$,
\item[(R2)] $v_{ijkl}(a) = v_{kjil}(a) = v_{ilkj}(a) = v_{klij}(a)$,
\item[(R3)] $v_{ijkl}(a[b,c]) = 0$,
\item[(R4)] $[\tfij(a), \tfkl(b)] = v_{ijkl}(ab)$.
\end{enumerate}

Suppose that $|i|+|j| = \cero$,
\begin{align*}
0 &=  \big[ \thij(a, b), [\tfij(c), \tfkl(1)] \big]  \\
{}&= \big[ [\thij(a, b), \tfij(c)], \tfkl(1) \big] - \big[\tfij(a), [\thij(1, 1), \tfkl(1)]\big] \\
{}&= [\tfij(abc + (-1)^{|i| + |j| + |a||b| + |b||c| + |c||a|}cba), \tfkl(1)] \\
{}&= v_{ijkl}(abc + (-1)^{|a||b| + |b||c| + |c||a|}cba).
\end{align*}
If $b = c = 1$, we have that
\[v_{ijkl}(2a) = 2v_{ijkl}(a) = 0,
\]
proving (R1).

If $c = 1$, we have that
\[
v_{ijkl}(ab - \menosuno{a}{b}ba) = 0,
\]
so
\[
0 = v_{ijkl}(abc + (-1)^{|a||b| + |b||c| + |c||a|}cba) = v_{ijkl}\big((ab + \menosuno{a}{b}ba)c\big),
\]
implying (R2). If $|k|+|l| = \cero$, the calculation is the same.

On the other hand,
\begin{align*}
[\tfij(a), \tfkl(b)] &= \big[ [\tfik(a), \tfkj(1)], \tfkl(b) \big] \\
{}&= \big[\tfik(a), [\tfkj(1), \tfkl(b)]\big] \\
{}&\quad \ -(-1)^{(|i|+|k|+|a|)(|k|+|j|)}\big[\tfkj(1), [\tfik(a), \tfkl(b)]\big] \\
{}&= (-1)^{(|l|+|k|+|b|)(|k|+|j|)}[\tfil(ab), \tfkj(1)] \\
{}&= v_{ilkj}(ab),
\end{align*}
since the sign does not play any role. Choosing $b = 1$ and using (R1), we have that
\[
v_{ijkl}(a) = v_{ilkj}(a).
\]
Doing the same but changing the indexes we have relations (R3) and (R4).

Thus, the morphism $\rho \colon \sttu\sh \to \widetilde{\mathfrak{st}}(3, 1, A)$ where $\rho\big(\Fij\sh(a)\big) = \tfij(a)$ and $\rho(\epsilon_{\theta\big((i, j, k, l)\big)}(\overline{ab})) = v_{ijkl}(ab)$ is actually a Lie superalgebra homomorphism completing the proof.
\end{proof}

\begin{Co}
The universal central extension of $\mathfrak{sl}(3, 1, A)$ is $\sttu\sh$. Moreover, $\Ho_2\big(\sttu\big) = \WW$.
\end{Co}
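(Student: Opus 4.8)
The plan is to deduce both assertions from Theorem \ref{T:sttu} together with the standard homological characterization of universal central extensions and with \cite[Corollary 1.9]{Neh}; no new computation is required.

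First I would invoke the well-known fact that, for a perfect Lie superalgebra $L$, the kernel of its universal central extension is precisely $\Ho_2(L)$. Theorem \ref{T:sttu} establishes that
\[
0 \to \WW \to \sttu\sh \xrightarrow{\pi} \sttu \to 0
\]
is the universal central extension of $\sttu$, and $\sttu$ is perfect. Hence the identification $\Ho_2\big(\sttu\big) = \WW$ follows at once from this characterization, which settles the second claim.

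Next, to identify the universal central extension of $\mathfrak{sl}(3, 1, A)$, I would appeal to \cite[Corollary 1.9]{Neh}. The canonical epimorphism $\varphi \colon \sttu \to \mathfrak{sl}(3, 1, A)$ is a central extension (its kernel is isomorphic to $\hc_1(A)$ for $m+n \geq 3$, as recorded in Section 2), and both superalgebras are perfect. Corollary 1.9 of \cite{Neh} then guarantees that a perfect Lie superalgebra and any of its perfect central extensions share the same universal central extension. Applying this with the central cover $\varphi$, the universal central extension of $\mathfrak{sl}(3, 1, A)$ coincides with that of $\sttu$, which by Theorem \ref{T:sttu} is $\sttu\sh$.

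I expect no genuine obstacle here, since the corollary is a formal consequence of the already-proved universality in Theorem \ref{T:sttu} and of the transitivity of universal central extensions along perfect central covers. The only point demanding minor care is verifying the hypotheses of \cite[Corollary 1.9]{Neh}, namely that $\varphi$ is central and that $\sttu$ and $\mathfrak{sl}(3, 1, A)$ are perfect; both facts are already available from Section 2, so the argument is immediate.
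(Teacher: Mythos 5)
Your proposal is correct and follows exactly the route the paper intends: the paper states the corollary without proof precisely because it is the formal consequence of Theorem \ref{T:sttu}, the identification of $\Ho_2$ with the kernel of the universal central extension of a perfect Lie superalgebra, and \cite[Corollary 1.9]{Neh} applied to the central extension $\varphi \colon \sttu \to \mathfrak{sl}(3,1,A)$, as announced in Section 2. Your verification of the hypotheses (centrality of $\varphi$ with kernel $\hc_1(A)$, perfectness of both superalgebras) matches the facts recorded there, so nothing is missing.
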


\section{Universal central extension of $\stdd$}
In this section we will find the universal central extension of $\stdd$. As in the previous section we consider the partition of $S_4$ but with a small difference. We care now about the name of some cosets, where
\[
\{ (1, 3, 2, 4), (1, 4, 2, 3), (2, 3, 1, 4), (2, 4, 1, 3) \},
\]
will be $P_5$ and
\[
\{ (3, 1, 4, 2), (3, 2, 4, 1), (4, 1, 3, 2), (4, 2, 3, 1) \},
\]
will be $P_6$. The order of the other cosets $P_1, \dots, P_4$ will not make any difference. Note that all the elements of $P_5$ and $P_6$ have the property that $|i| = |k|, |j| = |l|$ and $|i|+|j| = |k|+|l| = \uno$.

We also define a map $\sigma \colon S_4 \to \{ -1, 1 \}$, where $\sigma\big((i, j, k, l)\big) = 1$ if $(i, j, k, l) \in P_1, P_2, P_3$ or $P_4$. In $P_5$,
\begin{align*}
\sigma\big((i, j, k, l)\big) &= 1 \quad &\text{ if } (i, j, k, l) = (1, 3, 2, 4) \text{ or } (2, 4, 1, 3), \\
\sigma\big((i, j, k, l)\big) &= -1 \quad &\text { if } (i, j, k, l) = (1, 4, 2, 3) \text{ or } (2, 3, 1, 4),
\end{align*}
and in $P_6$,
\begin{align*}
\sigma\big((i, j, k, l)\big) &= 1 \quad &\text{ if } (i, j, k, l) = (3, 1, 4, 2) \text{ or } (4, 2, 3, 1), \\
\sigma\big((i, j, k, l)\big) &= -1 \quad &\text { if } (i, j, k, l) = (3, 2, 4, 1) \text{ or } (4, 1, 3, 2).
\end{align*}

On the other hand, let $\WW = A_2^4 \oplus A_0^2$ be $K$-supermodule formed by the direct sum of four copies of $A_2$ and two copies of $A_0$. As in Section \ref{S:sttu} we consider the maps $\epsilon_m(\bar{a}) = (0, \dots, \bar{a}, \dots, 0)$ in position $m$.

We define now the super 2-cocycle which will generate the universal central extension. Using the decomposition of Lemma \ref{L:dec} we build a $K$-bilinear map, defining it in the elements of the $K$-basis and extending it by $K$-linearity,
\[
\psi \colon \stdd \times \stdd \to \WW,
\]
where
\begin{align*}
{}&\psi\big(\Fij(a), \Fkl(b)\big) = \epsilon_{\theta\big((i, j, k, l)\big)}(\overline{ab}), &{}&\text{if } (i, j, k, l) \in P_1,P_2,P_3,P_4 \\
{}&\psi\big(\Fij(a), \Fkl(b)\big) = (-1)^{|b|}\sigma\big((i, j, k, l)\big)\epsilon_{\theta\big((i, j, k, l)\big)}(\overline{ab}), \quad &{}&\text{if } (i, j, k, l) \in P_5 \text{ or } P_6, \\
{}&\psi(x, y) = 0 &{}&\text{if } x \text{ or } y \text{ belong to } \mathcal{H}.
\end{align*}

\begin{Le}
The $K$-bilinear map $\psi$ is a super 2-cocycle.
\end{Le}

\begin{proof}
The map is even since $|i| + |j| + |k| + |l| = \cero$. To check antisymmetry, it suffices to see what happen when $(i, j, k, l) \in P_5$ or $P_6$ since in the other cases the sign do not make any difference since $A_2$ and $A_0$ are commutative. If we suppose that $(i, j, k, l) \in P_5$, we know that $|i| + |j| = |k| + |l| = \uno$,

\begin{align*}
-(-1)^{|\Fij(a)||\Fkl(b)|} \psi\big(\Fkl(b), \Fij(a)\big) &= -(-1)^{(|i|+ |j|+|a|)(|k|+|l|+|b|)}\psi\big(\Fkl(b), \Fij(a)\big) \\
{}&=-(-1)^{(\uno + |a|)(\uno + |b|)}(-1)^{|a|}\sigma\big((k, l, i, j)\big) \epsilon_5(\overline{ba})  \\
{}&=(-1)^{|b|+ |a||b|}\sigma\big((k, l, i, j)\big) \epsilon_5((-1)^{|a||b|}\overline{ab}) \\
{}&=(-1)^{|b|}\sigma\big((i, j, k, l)\big)\epsilon_{\theta\big((i, j, k, l)\big)}(\overline{ab}) \\
{}&=\psi\big(\Fij(a), \Fkl(b)\big),
\end{align*}
since $\sigma\big((i, j, k, l)\big) = \sigma\big((k, l, i, j)\big)$ and $\overline{ab} = (-1)^{|a||b|}\overline{ba}$. If $(i, j, k, l) \in P_6$ it is analogue.

The identity $\psi(x\scero, x\scero) = 0$ where $x\scero \in \big(\stdd\big)\scero$ is straightforward by definition. The last step is to check Jacobi identity. With the aim of simplify notation, we denote by $J(x, y, z)$ the expression

\[
\menosuno{x}{z} \psi([x, y], z) + \menosuno{x}{y} \psi([y, z], x) + \menosuno{y}{z}\psi([z, x], y) = 0,
\]
so we have to check that $J(x, y, z) = 0$ for all $x, y, z \in \stdd$.

Suppose that $\psi([x, y], z) \neq 0$. Using the decomposition of Lemma \ref{L:dec} we see that at most one of $x, y$ belongs to $\mathcal{H}$. Suppose that $x \in \mathcal{H}$. To exclude trivial cases we need that $y = \Fij(a)$ and $z = \Fkl(b)$, where $i, j, k, l$ are distinct. If $(i, j, k, l) \in P_1, \dots, P_4$ the signs does not make any difference so the proof is the same as in \cite[Lemma 2.2]{GaSh}. Let us suppose that $(i, j, k, l) = (1, 3, 2, 4) \in P_5$, the other cases are similar.

If $x = h(c, d)$, then
\begin{align*}
J(x, y , z) &= (-1)^{(|c|+|d|)(|b|+\uno)}\psi\big([h(c, d), F_{13}(a)], F_{24}(b)\big) \\
{}&\quad \ + (-1)^{(|a| + \uno)(|b|+\uno)}\psi\big([F_{24}, h(c,d)], F_{13}(b)\big) \\
{}&= (-1)^{(|c|+|d|)(|b|+\uno)}\psi\big(F_{13}\big( (ab - \menosuno{a}{b}ba)c \big), F_{24}(b)\big) + 0\\
{}&= (-1)^{(|c|+|d|)(|b|+\uno) + |b|} \sigma\big((1, 3, 2, 4)\big) \epsilon_5 \big( \overline{(ab - \menosuno{a}{b}ba)cb}\big) \\
{}&= 0.
\end{align*}

If $x = H_{12}(1, c)$, then
\begin{align*}
J(x, y, z) &= (-1)^{|c|(|a|+\uno)} \psi\big([H_{12}(1, c), F_{13}(a)], F_{24}(b)\big) \\
{}&\quad \ + (-1)^{(|a| + \uno)(|b|+\uno)}\psi\big([F_{24}(b), H_{12}(1, c)], F_{13}(a)\big) \\
{}&=(-1)^{|c|(|a|+\uno)}\psi\big(F_{13}(ca), F_{24}(b)\big) \\
{}&\quad \ +(-1)^{(|a| + \uno)(|b|+\uno) + |c|(|b|+\uno)}\psi\big(F_{24}(cb), F_{13}(a)\big) \\
{}&=(-1)^{|c|(|b|+\uno) + |a|}\sigma\big((1, 3, 2, 4)\big)\epsilon_5(\overline{cab}) \\
{}&\quad \ +(-1)^{(|a|+|c|+\uno)(|b|+\uno) + |b|}\sigma\big((2, 4, 1, 3)\big)\epsilon_5(\overline{cba}) \\
{}&= (-1)^{|c|(|b|+\uno)}\big( (-1)^{|a|}\epsilon_5(\overline{cab}) + (-1)^{(|a|+\uno)(|b|+\uno) + |b|} \epsilon_5(\overline{cba})\big) \\
{}&=(-1)^{|c|(|b|+\uno) + |a|}\big( \epsilon_5(\overline{cab - \menosuno{a}{b}cba}) \big) \\
{}&= 0.
\end{align*}

If $x = H_{13}(1, c)$, then
\begin{align*}
J(x, y, z) &= (-1)^{|c|(|a|+\uno)} \psi\big([H_{13}(1, c), F_{13}(a)], F_{24}(b)\big) \\
{}&= \psi\big( F_{13}(ca + (-1)^{\uno + |a||c| }ac), F_{24}(b) \big) \\
{}&= (-1)^{|b|}\sigma\big((1, 3, 2, 4)\big) \epsilon_5(\overline{(ca - \menosuno{a}{c}ac)b}) \\
{}&= 0.
\end{align*}

If $x = H_{14}(1, c)$, then
\begin{align*}
J(x, y, z) &= (-1)^{|c|(|b|+\uno)} \psi\big([H_{14}(1, c), F_{13}(a)], F_{24}(b)\big) \\
{}&\quad \ + (-1)^{(|a| + \uno)(|b|+\uno)}\psi\big([F_{24}(b), H_{14}(1, c)], F_{13}(a)\big) \\
{}&=(-1)^{|c|(|b|+\uno)}\psi\big(F_{13}(ca), F_{24}(b)\big) \\
{}&\quad \ +(-1)^{(|a| + \uno)(|b|+\uno) + |c|}\psi\big(F_{24}(bc), F_{13}(a)\big) \\
{}&=(-1)^{|c|(|b|+\uno) + |b|}\sigma\big((1, 3, 2, 4)\big)\epsilon_5(\overline{cab}) \\
{}&\quad \ +(-1)^{(|a|+\uno)(|b|+\uno) + |c| + |a|}\sigma\big((2, 4, 1, 3)\big)\epsilon_5(\overline{bca}) \\
{}&= (-1)^{|b|+|c|}\big( (-1)^{|c||b|}\epsilon_5(\overline{cab}) - (-1)^{|a||b|}\epsilon_5(\overline{bca}) \big) \\
{}&=(-1)^{|b|+|c|}\big((-1)^{|c||b|+|b||c|+|a||b|}\epsilon_5(\overline{bca})  - (-1)^{|a||b|}\epsilon_5(\overline{bca})  \big) \\
{}&=0.
\end{align*}

Suppose now that neither $x, y, z \in \mathcal{H}$. If we want that $\psi([x, y], z) \neq 0$ then we must have $\psi\big([\Fik(a), \Fkj(b)], \Fkl(c)\big)$ or $\psi\big([\Fil(a), \Flj(b)], \Fkl(c)\big)$. Again, if $(i, j, k, l) \in P_1, \dots, P_4$ the sign does not matter so the proof is done in \cite{GaSh}. We suppose that $(i, j, k, l) = (1, 3, 2, 4) \in P_5$, since the other cases are similar.

If $x = F_{12}(a)$, $y = F_{23}(b)$ and $z = F_{24}(c)$, then
\begin{align*}
J(x, y, z) &= (-1)^{|a|(|c|+\uno)}\psi\big( F_{13}(ab), F_{24}(c) \big) \\
{}&\quad \ - (-1)^{(|b|+\uno)(|c|+\uno) + |a|(|c|+\uno)}\psi\big( F_{14}(ac), F_{23}(b) \big) \\
{}&=(-1)^{|a|(|c|+\uno) + |c|}\sigma\big((1, 3, 2, 4)\big)\epsilon_5(\overline{abc}) \\
{}&\quad \ -(-1)^{(|a|+|b|+\uno)(|c|+\uno) + |b|}\sigma\big((1, 4, 2, 3)\big)\epsilon_5(\overline{acb}) \\
{}&=(-1)^{|a|(|c|+\uno) + |c|}\big(\epsilon_5(\overline{abc}) + (-1)^{|b||c| + \uno} \epsilon_5(\overline{acb})  \big) \\
{}&=(-1)^{|a|(|c|+\uno) + |c|} \; \epsilon_5(\overline{a(bc - \menosuno{b}{c}cb)}) \\
{}&= 0.
\end{align*}

If $x = F_{14}(a)$, $y = F_{43}(b)$ and $z = F_{24}(c)$, then
\begin{align*}
J(x, y, z) &= (-1)^{(|a|+\uno)(|c|+\uno)}\psi\big( F_{13}(ab), F_{24}(c) \big) \\
{}&\quad \ - (-1)^{|b|(|a|+\uno) + |b|(|c|+\uno)}\psi\big( F_{23}(ac), F_{14}(b) \big) \\
{}&=(-1)^{(|a|+\uno)(|c|+\uno) + |c|}\sigma\big((1, 3, 2, 4)\big)\epsilon_5(\overline{abc}) \\
{}&\quad \ -(-1)^{|b|(|a|+|c|) + |a|}\sigma\big((2, 3, 1, 4)\big)\epsilon_5(\overline{cba}) \\
{}&=-(-1)^{|a|}\epsilon_5\big(\overline{(-1)^{|a||c|}abc - (-1)^{|a||b|+|b||c|} cba}\big) \\
{}&=-(-1)^{|a|}\epsilon_5\big(\overline{(-1)^{|a||c|}abc - (-1)^{|a||b|+|b||c| + |c|(|b|+|a|)} bac}\big) \\
{}&=-(-1)^{|a| + |a||c|}\epsilon_5\big(\overline{(ab - \menosuno{a}{b})c}\big) \\
{}&= 0.
\end{align*}
\end{proof}

We have a central extension
\[
\xymatrix{
0 \ar[r] & \WW \ar[r] & \stdd\sh \ar[r]^{\pi} & \stdd \ar[r] & 0,
}
\]
where $\stdd\sh = \stdd \oplus \WW$ is the Lie superalgebra constructed by the surjective super 2-cocycle $\psi$, defined by the following relations
\begin{align}
{}& a \mapsto \Fij\sh(a) \text{ is a }K\text{-linear map,} \label{Eq:dsh1} \\
{}& [\WW, \WW] = [\Fij\sh(a), \WW] = 0, \label{Eq:dsh2} \\
{}& [\Fij\sh(a), \Fjk\sh(b)] = \Fik\sh(ab) \text{ for distinct i, j, k,} \label{Eq:dsh3} \\
{}& [\Fij\sh(a), \Fij\sh(a)] = 0, \label{Eq:dsh4} \\
{}& [\Fij\sh(a), \Fik\sh(b)] = 0, \label{Eq:dsh5} \\
{}& [\Fij\sh(a), \Fkl\sh(b)] = \epsilon_{\theta\big((i, j, k, l)\big)}(\overline{ab}) \text{ if } (i, j, k, l)\in P_1, P_2, P_3, P_4  \label{Eq:dsh6} \\
{}& [\Fij\sh(a), \Fkl\sh(b)] = (-1)^{|b|}\sigma\big((i, j, k, l)\big)\epsilon_{\theta\big((i, j, k, l)\big)}(\overline{ab}) \text{ if } (i, j, k, l)\in P_5, P_6.  \label{Eq:dsh7}
\end{align}

\begin{Th}\label{T:dd}
The central extension $0 \to \WW \to \stdd\sh \to \stdd$ is universal.
\end{Th}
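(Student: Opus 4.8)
The plan is to reproduce the architecture of the proof of Theorem~\ref{T:sttu}, the genuinely new ingredient being the careful propagation of the sign function $\sigma$ and of the parity factors $(-1)^{|b|}$ attached to the cosets $P_5, P_6$. Fix an arbitrary central extension
\[
\xymatrix{ 0 \ar[r] & \VV \ar[r] & \widetilde{\mathfrak{st}}(2, 2, A) \ar[r]^{\tau} & \stdd \ar[r] & 0, }
\]
with $\VV \subseteq \Zenter\big(\widetilde{\mathfrak{st}}(2, 2, A)\big)$. As in Theorem~\ref{T:sttu} I would choose preimages $\tfij(a)$ of $\Fij(a)$ on a $K$-basis and extend by $K$-linearity, write $[\tfik(a), \tfkj(b)] = \tfij(ab) + v_{ijk}(a, b)$ with $v_{ijk}(a, b) \in \VV$ central, and use a single Jacobi identity through $\tfkl(c), \tflj(b)$ to see that $v_{ijk}$ does not depend on $k$; normalising $\tfij(b) \mapsto \tfij(b) + v_{ij}(1, b)$ then achieves $[\tfik(1), \tfkj(b)] = \tfij(b)$ for all distinct $i, j, k$. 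The objective is to define $\rho\big(\Fij\sh(a)\big) = \tfij(a)$ together with a suitable $K$-linear map $\rho|_{\WW}\colon \WW \to \VV$, and to check that these data respect relations \eqref{Eq:dsh1}--\eqref{Eq:dsh7}; then $\tau \circ \rho = \pi$ by construction.

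I would first discharge the relations that do not feel the new signs. Relations \eqref{Eq:dsh1}--\eqref{Eq:dsh3} hold by construction, and \eqref{Eq:dsh4}, \eqref{Eq:dsh5} follow from the two short Jacobi computations used for \eqref{Eq:sh4}, \eqref{Eq:sh5}. Setting $\thij(a, b) = [\tfij(a), \tfji(b)]$, the six auxiliary identities of Lemma~\ref{L:ident} for the tilded generators carry over unchanged, since their derivation is insensitive to the matrix size. Writing $v_{ijkl}(a) = [\tfij(a), \tfkl(1)] \in \VV$ for distinct $i, j, k, l$, the remaining content is to show that each $v_{ijkl}$ factors through the correct quotient of $A$, is compatible with the Klein four-group action defining the cosets (up to the sign $\sigma$ on $P_5, P_6$), and reproduces \eqref{Eq:dsh6}--\eqref{Eq:dsh7}.

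The dichotomy $A_2$ versus $A_0$ is explained by a parity observation that organises the whole argument: each coset $P_1, \dots, P_4$ contains a representative with $|i| + |j| = |k| + |l| = \cero$, whereas every representative of $P_5, P_6$ satisfies $|i| + |j| = |k| + |l| = \uno$. Consequently, for $P_1, \dots, P_4$ I would work with an even--even representative and the argument is verbatim that of Theorem~\ref{T:sttu}: the computation $0 = \big[\thij(a, b), [\tfij(c), \tfkl(1)]\big]$ (using the $\thij$-version of \eqref{Eq:id5}, \eqref{Eq:id6}) yields $v_{ijkl}\big(abc + (-1)^{|a||b| + |b||c| + |c||a|}cba\big) = 0$, giving $2 v_{ijkl}(a) = 0$ at $b = c = 1$ and the supercommutator relations, so that $v_{ijkl}$ descends to $A_2$. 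For $P_5, P_6$, where $|i| + |j| = \uno$ cannot be avoided, the same computation produces instead $v_{ijkl}\big(abc - (-1)^{|a||b| + |b||c| + |c||a|}cba\big) = 0$, which at $b = c = 1$ is the empty relation $v_{ijkl}(0) = 0$; thus one does \emph{not} obtain $2 v_{ijkl}(a) = 0$, while at $c = 1$ one gets $v_{ijkl}\big(ab - \menosuno{a}{b}ba\big) = 0$, and together with the analogue of (R3), namely $v_{ijkl}\big(a[b, c]\big) = 0$, this shows $v_{ijkl}$ descends to $A_0 = A/A[A, A]$.

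The main obstacle, and the step I would carry out most carefully, is the sign-twisted symmetry and bracket formula on $P_5, P_6$. Repeating the Jacobi computation $[\tfij(a), \tfkl(b)] = \big[[\tfik(a), \tfkj(1)], \tfkl(b)\big] = \cdots$ now genuinely transports the odd-index parities: it produces exactly the factor $(-1)^{|b|}$ together with the ratio of signs $\sigma$ appearing in \eqref{Eq:dsh7}, and these cannot be discarded as in the $\sttu$ case, where every nonzero cocycle value lived in $A_2$ and the relation $2\bar{a} = 0$ rendered all signs irrelevant. The bookkeeping is precisely the one already performed in the antisymmetry verification of the preceding lemma, using $\sigma\big((i, j, k, l)\big) = \sigma\big((k, l, i, j)\big)$ and $\overline{ab} = (-1)^{|a||b|}\overline{ba}$ in $A_0$; matching it against $\psi$ forces $\rho\big(\epsilon_{\theta((i, j, k, l))}(\bar{a})\big) = v_{ijkl}(a)$ on $P_1, \dots, P_4$ and $\rho\big(\epsilon_{\theta((i, j, k, l))}(\bar{a})\big) = \sigma\big((i, j, k, l)\big)\, v_{ijkl}(a)$ on $P_5, P_6$, the sign-twisted symmetry guaranteeing that these are independent of the chosen representative. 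With \eqref{Eq:dsh1}--\eqref{Eq:dsh7} all verified, $\rho$ is a well-defined homomorphism with $\tau \circ \rho = \pi$; and since each $\epsilon_m(\bar{a})$ is a bracket (take $b = 1$ in \eqref{Eq:dsh6}, \eqref{Eq:dsh7}), $\stdd\sh$ is perfect, whence $\rho$ is unique and the extension $0 \to \WW \to \stdd\sh \to \stdd$ is universal.
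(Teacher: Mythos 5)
Your proposal is correct and follows essentially the same route as the paper: the same normalisation of preimages, the same reduction to conditions on $v_{ijkl}(a) = [\tfij(a), \tfkl(1)]$, the same split between $P_1, \dots, P_4$ (even--even representative, $2$-torsion via the $\thij(a,b)$-trick, descent to $A_2$, exactly the paper's conditions ($\mathcal{R}$1)--($\mathcal{R}$4)) and $P_5, P_6$ (sign-twisted symmetry $v_{ijkl} = -v_{kjil} = -v_{ilkj} = v_{klij}$ matching $\sigma$, the factor $(-1)^{|b|}$, and descent to $A_0$ via $v_{ijkl}(a[b,c]) = 0$, i.e.\ the paper's (C1)--(C3)). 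Your closing observation that perfectness of $\stdd\sh$ gives uniqueness of $\rho$ is a small addition that the paper leaves implicit, but otherwise the two arguments coincide.
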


\begin{proof}
Let
\[
\xymatrix{
0 \ar[r] & \VV \ar[r] & \widetilde{\mathfrak{st}}(2, 2, A) \ar[r]^{\tau} & \stdd \ar[r] & 0
}
\]
be a central extension. As in Theorem \ref{T:du} we have to build a Lie superalgebra homomorphism $\rho \colon \stdd\sh \to \widetilde{\mathfrak{st}}(2, 2, A)$ such that $\tau \circ \rho = \pi$. Choosing preimages $\tfij(a)$ of $\tau$, we have to check they follow relations \eqref{Eq:dsh1}--\eqref{Eq:dsh7}. Doing the analogue computations as in Theorem \ref{T:du} it is obvious that relations \eqref{Eq:dsh1}--\eqref{Eq:dsh5} are satisfied. We have to check that the $\tfij(a)$ follow \eqref{Eq:dsh6} and \eqref{Eq:dsh7} to complete the proof.

As in the previous section, when $i, j, k, l$ are distinct we denote
\[
[\tfij(a), \tfkl(1)] = v_{ijkl}(a).
\]

To satisfy relations \eqref{Eq:dsh6} and \eqref{Eq:dsh7} we want to define the homomorphism from $\WW$ by the expression $\rho(\epsilon_{\theta\big((i, j, k, l)\big)}(\overline{ab})) = \sigma\big((i, j, k, l)\big)v_{ijkl}(ab)$. If $(i, j, k, l) \in P_1, \dots, P_4$ we have to check the conditions
\begin{enumerate}
\item[($\mathcal{R}$1)] $2v_{ijkl}(a) = 0$,
\item[($\mathcal{R}$2)] $v_{ijkl}(a) = v_{kjil}(a) = v_{ilkj}(a) = v_{klij}(a)$,
\item[($\mathcal{R}$3)] $v_{ijkl}(a[b,c]) = 0$,
\item[($\mathcal{R}$4)] $[\tfij(a), \tfkl(b)] = v_{ijkl}(ab)$.
\end{enumerate}

Note that every permutation in $P_1, \dots, P_4$ has an element such that $|i|+|j| = \cero$. Thus, recovering some computations of the previous section we have that
\begin{align*}
0 &= \big[ \thij(1, 1), [\tfij(a), \tfkl(1)] \big] \\
{}&= [\tfij( a+ (-1)^{|i|+|j|}a), \tfkl(1)] \\
{}&= v_{ijkl}(a + (-1)^{|i|+|j|}a).
\end{align*}
If $|i|+|j| = \cero$, we have that $[\tfij(a), \tfkl(1)] = -[\tfij(a), \tfkl(1)]$. Then,
\[
[\tfil(a), \tfkj(1)] = -(-1)^{(|k| + |l|)(|k|+|j|+|b|)} [\tfij(a), \tfkl(1)],
\]
so $[\tfil(a), \tfkj(1)] = -[\tfil(a), \tfkj(1)]$. Doing the same but changing the indexes we obtain ($\mathcal{R}$1) and ($\mathcal{R}$2), and proceeding as in the proof of Theorem \ref{T:sttu}, conditions ($\mathcal{R}$3) and ($\mathcal{R}$4) are satisfied.

If $(i, j, k, l) \in P_5, P_6$, we have that
\begin{align*}
[\tfij(a), \tfkl(b)] &= [\rho\big(\Fij\sh(a)\big), \rho\big(\Fkl\sh(b)\big)]  \\
{}&= \rho\big([\Fij\sh(a), \Fkl\sh(b)]) \\
{}&= \rho\big((-1)^{|b|}\sigma\big((i, j, k, l)\big)\epsilon_{\theta\big((i, j, k, l)\big)}(\overline{ab})\big) \\
{}&=(-1)^{|b|} v_{ijkl}(ab) \\
{}&=(-1)^{|b|}[\tfij(ab), \tfkl(1)].
\end{align*}

Thus, we have to check the following conditions:
\begin{enumerate}
\item[(C1)] $v_{ijkl}(a) = -v_{kjil}(a) =- v_{ilkj}(a) = v_{klij}(a)$,
\item[(C2)] $[\tfij(a), \tfkl(b)] = (-1)^{|b|}[\tfij(ab), \tfkl(1)]$,
\item[(C3)] $v_{ijkl}(a[b,c]) = 0$.
\end{enumerate}

To see (C1),
\begin{align*}
v_{ijkl}(a) &=[\tfij(a), \tfkl(1)] \\
{}& =\big[ \tfij(a), [\tfki(1), \tfil(1)] \big] = \big[ [\tfij(a), \tfki(1)], \tfil(1) \big] \\
{}&=-(-1)^{(|i|+|j|+|a|)(|k|+|i|)}[\tfkj(a), \tfil(1)]\\
{}&=-[\tfkj(a), \tfil(1)] =-v_{kjil}(a),
\end{align*}

and

\begin{align*}
v_{ijkl}(a) &=[\tfij(a), \tfkl(1)] \\
{}& =\big[ \tfij(a), [\tfkj(1), \tfjl(1)] \big] = \\
{}&=(-1)^{(|i|+|j|+|a|)(|k|+|j|)} \big[ \tfkj(1), [\tfij(a), \tfjl(1)] \big]\\
{}&=(-1)^{(|i|+|j|+|a|)(|k|+|j|)}[\tfkj(1), \tfil(a)] = \\
{}&=-(-1)^{(|k|+|j|)(|l|+|j|)}[\tfil(a), \tfkj(1)] = \\
{}&=-[\tfil(a), \tfkj(1)] = -v_{ilkj}(a).
\end{align*}

To see (C2),
\begin{align*}
[\tfij(a), \tfkl(b)] &= \big[ \tfij(a), [\tfkj(1), \tfjl(b)] \big] \\
{}&= (-1)^{(|i|+|j|+|a|)(|k|+|j|)}\big[ \tfkj(1), [\tfij(a), \tfjl(b)] \big] \\
{}&=(-1)^{(|i|+|j|+|a|)(|k|+|j|)}\big[ \tfkj(1), \tfil(ab) \big] \\
{}&\quad \ -(-1)^{(|j|+|l|+|b|)(|k|+|j|)}[\tfil(ab), \tfkj(1)] \\
{}&=-(-1)^{|b|}[\tfil(ab), \tfkj(1)] = (-1)^{|b|}[\tfij(ab), \tfkl(1)] \\
{}&= (-1)^{|b|}v_{ijkl}(ab),
\end{align*}
by part (C1).

Using (C2) and the fact that $|k|+|l| = \uno$,

\begin{align*}
v_{ijkl}(a[b,c]) &= [\tfij(a[b, c]), \tfkl(1)] \\
{}&= (-1)^{|b|+|c|}\sigma\big((i, j, k, l)\big)[\tfij(a), \tfkl(bc - \menosuno{b}{c}cb)] \\
{}&=(-1)^{|b|+|c|}\sigma\big((i, j, k, l)\big)\big[ \tfij(a), [\thkl(b, c), \tfkl(1)] \big] \\
{}&=0,
\end{align*}
by Jacobi identity, we have that (C3) is satisfied.

Thus, we have a Lie superalgebra homomorphism $\rho \colon \stdd\sh \to \widetilde{\mathfrak{st}}(2, 2, A)$ completing the proof.
\end{proof}

\section{Concluding remarks}

Combining the main theorems presented here with the main theorems of \cite{ChSu} and \cite{ChGu} we have a complete characterization of $\Ho_2\big(\stmn\big)$ and $\Ho_2\big(\slmn\big)$ for $m + n \geq 3$.

\begin{Th}\label{T:con1}
Let $K$ a unital commutative ring and $A$ an associative unital $K$-superalgebra with a $K$-basis containing the identity. Then,
\[
\Ho_2\big(\stmn\big)=
\begin{cases*}
0 & $\text{for } m+n  \geq 5 \text{ or }m=2, n=1$, \\
A_3^6 & $\text{for }m=3, n=0$, \\
A_2^6 & $\text{for }m=4, n=0$, \\
\Pi(A_2)^6 & $\text{for }m=3, n=1$, \\
A_2^4 \oplus A_0^2 & $\text{for }m=2, n=2$,
\end{cases*}
\]
where $A_m$ is the quotient of $A$ by the ideal $mA + A[A, A]$ (Definition \ref{D:Am}) and $\Pi$ is the parity change functor.
\end{Th}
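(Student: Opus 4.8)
The plan is to read Theorem \ref{T:con1} as an assembly theorem: the five cases are all instances of the single principle that, for a perfect Lie superalgebra $L$, the kernel of the universal central extension $\mathfrak{u}(L)\to L$ is precisely $\Ho_2(L)$ (established in the super setting in \cite{Neh}), combined with the universal central extensions computed in this paper and in \cite{ChSu, ChGu}. Since every $\stmn$ is perfect, $\Ho_2(\stmn)$ is identified with this kernel in each case, and the task reduces to reading off the kernel from the appropriate universal central extension. So the first step is simply to invoke perfectness of $\stmn$ and the super-analogue of Garland's theorem to pass from ``second homology'' to ``kernel of the universal central extension''.

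I would then dispose of the two vanishing cases. For $m+n\geq 5$, \cite{ChSu} shows that $\varphi\colon\stmn\to\slmn$ is itself the universal central extension; since the universal central extension of a perfect superalgebra is always centrally closed (its own universal central extension is the identity, so its $\Ho_2$ vanishes), this forces $\Ho_2(\stmn)=0$. For $m=2,\,n=1$, Theorem \ref{T:du} supplies a unique splitting section for every central extension of $\stdu$, so $\stdu$ is its own universal central extension and is again centrally closed, giving $\Ho_2(\stdu)=0$.

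Next I would treat the four nonvanishing cases. The purely even cases $m=3,\,n=0$ and $m=4,\,n=0$ are $\mathfrak{st}_3(A)$ and $\mathfrak{st}_4(A)$ over the associative superalgebra $A$, whose universal central extensions and second homology groups $A_3^6$ and $A_2^6$ are computed in \cite{ChGu}, which I would cite directly. The two genuinely super cases are the new contributions of this paper: Theorem \ref{T:sttu} exhibits $\sttu\sh\to\sttu$ with kernel $\WW=\Pi(A_2)^6$ as the universal central extension, whence $\Ho_2(\sttu)=\Pi(A_2)^6$; and Theorem \ref{T:dd} exhibits $\stdd\sh\to\stdd$ with kernel $\WW=A_2^4\oplus A_0^2$ as the universal central extension, whence $\Ho_2(\stdd)=A_2^4\oplus A_0^2$. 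Collecting the five lines yields the stated formula.

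No genuine obstacle remains at this stage, since all the hard computations live in the earlier theorems; the only points demanding care are bookkeeping. I would verify that the grading conventions and the definition of $\WW$ used in Theorems \ref{T:sttu} and \ref{T:dd} match the $\Pi(A_2)^6$ and $A_2^4\oplus A_0^2$ of the statement, in particular that the parity change in the $(3,1)$ case is recorded correctly, and that the $n=0$ normalisations of \cite{ChGu} agree with the convention $A_m=A/(mA+A[A,A])$ of Definition \ref{D:Am}. Once these conventions are aligned, the case analysis assembles into the claimed description of $\Ho_2(\stmn)$ for all $m+n\geq 3$.
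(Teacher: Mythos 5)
Your proposal is correct and takes essentially the same route as the paper: Theorem \ref{T:con1} is proved there exactly as an assembly, combining Theorems \ref{T:du}, \ref{T:sttu} and \ref{T:dd} with the results of \cite{ChSu} and \cite{ChGu}, under the standard identification (via \cite{Neh}) of $\Ho_2$ of a perfect Lie superalgebra with the kernel of its universal central extension. The centrally-closed argument you spell out for the two vanishing cases and the convention check for $A_m$ and $\Pi(A_2)$ are precisely the bookkeeping the paper leaves implicit.
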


\begin{Th}\label{T:con2}
Let $K$ a unital commutative ring and $A$ an associative unital $K$-superalgebra with a $K$-basis containing the identity. Then,
\[
\Ho_2\big(\slmn\big)=
\begin{cases*}
\hc_1(A) & $\text{for } m+n  \geq 5 \text{ or }m=2, n=1$, \\
\hc_1(A) \oplus A_3^6 & $\text{for }m=3, n=0$, \\
\hc_1(A) \oplus A_2^6 & $\text{for }m=4, n=0$, \\
\hc_1(A) \oplus \Pi(A_2)^6 & $\text{for }m=3, n=1$, \\
\hc_1(A) \oplus A_2^4 \oplus A_0^2 & $\text{for }m=2, n=2$,
\end{cases*}
\]
where $A_m$ is the quotient of $A$ by the ideal $mA + A[A, A]$ (Definition \ref{D:Am}) and $\Pi$ is the parity change functor.
\end{Th}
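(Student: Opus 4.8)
The plan is to reduce both homology computations to the kernel of a single universal central extension. Recall that for a perfect Lie superalgebra $L$ one has $\Ho_2(L) = \ker(\mathfrak{u}(L) \to L)$, where $\mathfrak{u}(L)$ denotes the universal central extension; since both $\stmn$ and $\slmn$ are perfect, this applies throughout. By \cite[Corollary 1.9]{Neh}, because $\varphi \colon \stmn \to \slmn$ is a central extension and $\stmn$ is perfect, the universal central extension of $\stmn$ is also the universal central extension of $\slmn$. Writing $\mathfrak{u} \to \slmn$ for this common object, we obtain $\Ho_2(\slmn) = \ker(\mathfrak{u} \to \slmn)$ and $\Ho_2(\stmn) = \ker(\mathfrak{u} \to \stmn)$, the values of the latter being precisely those recorded in Theorem \ref{T:con1}.

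Next I would organize the two epimorphisms into the factorization $\mathfrak{u} \to \stmn$ followed by $\varphi \colon \stmn \to \slmn$. Since the first map is surjective, the elementary kernel sequence of a composite yields, using $\ker\varphi \cong \hc_1(A)$ for $m+n \geq 3$ (from \cite{ChSu}), the short exact sequence of central $K$-modules
\[
0 \to \Ho_2(\stmn) \to \Ho_2(\slmn) \to \hc_1(A) \to 0.
\]
For $m+n \geq 5$ and for $m=2, n=1$ we have $\Ho_2(\stmn) = 0$ by Theorem \ref{T:con1}, so the sequence immediately gives $\Ho_2(\slmn) \cong \hc_1(A)$, matching the first line of the statement.

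For the four remaining cases the real content is to split the displayed sequence as $K$-modules, and here I would exploit the explicit form of $\mathfrak{u}$. In each case $\mathfrak{u} = \stmn\sh = \stmn \oplus \WW$ is built from a surjective super $2$-cocycle $\psi$ that \emph{vanishes} whenever one argument lies in $\mathcal{H}$. By the decomposition lemma for $\stmn\sh$, the subspace $\mathcal{H}\sh := h\sh(A,A) \oplus \bigoplus_{j} \Huj\sh(1,A)$ meets $\WW$ trivially and, because $\psi|_{\mathcal{H}} = 0$, the projection $\pi \colon \stmn\sh \to \stmn$ restricts to an isomorphism of $\mathcal{H}\sh$ onto $\mathcal{H}$. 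As $\ker\varphi = \hc_1(A)$ sits inside $\mathcal{H}$, this furnishes a canonical $K$-linear lift $(\pi|_{\mathcal{H}\sh})^{-1}(\hc_1(A)) \subset \mathcal{H}\sh$, and since $\pi$ respects the two parallel decompositions, the full preimage decomposes as $\pi^{-1}(\hc_1(A)) = \WW \oplus (\pi|_{\mathcal{H}\sh})^{-1}(\hc_1(A))$. Hence $\Ho_2(\slmn) \cong \hc_1(A) \oplus \Ho_2(\stmn)$, and substituting the values $\Pi(A_2)^6$ and $A_2^4 \oplus A_0^2$ (the corollaries to Theorems \ref{T:sttu} and \ref{T:dd}), together with $A_3^6$ and $A_2^6$ for $n=0$ (from \cite{ChGu}, whose construction has the same shape), gives the four remaining lines.

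The main obstacle is this splitting step: one must confirm that $\pi^{-1}(\hc_1(A)) = \ker(\mathfrak{u} \to \slmn)$ genuinely decomposes as the internal direct sum above, with the second summand mapping isomorphically onto $\hc_1(A)$. Everything rests on the vanishing of $\psi$ on $\mathcal{H}$, so the care goes into verifying that the lift of $\hc_1(A)$ inside $\mathcal{H}\sh$ is central in $\mathfrak{u}$ (which is automatic, as $\ker(\mathfrak{u}\to\slmn)$ is central) and that it is a genuine $K$-linear complement to $\WW$; the rest is bookkeeping across the five regimes.
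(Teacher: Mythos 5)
Your proposal is correct and takes essentially the same route the paper intends but leaves implicit in its one-sentence concluding remark: by \cite[Corollary 1.9]{Neh} the universal central extension of $\stmn$ is that of $\slmn$, the kernel of $\varphi$ is $\hc_1(A)$ for $m+n\geq 3$ by \cite{ChSu}, and the kernel over $\slmn$ then splits as $\Ho_2\big(\stmn\big)\oplus\hc_1(A)$, with the summands read off from Theorem \ref{T:con1} and \cite{ChGu}. One simplification: since $\stmn\sh$ is by construction literally $\stmn\oplus\WW$ as a $K$-supermodule with $\pi$ the projection onto the first factor, one has $\pi^{-1}(S)=S\oplus\WW$ for \emph{any} $K$-submodule $S\subseteq\stmn$, so your detour through $\mathcal{H}\sh$ and the vanishing of $\psi$ on $\mathcal{H}$ is sound but unnecessary.
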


%\bibliographystyle{plain}
%\bibliography{biblio_xabi}

\end{document}